\let\pa\partial
\let\na\nabla
\let\eps\varepsilon
\newcommand{\N}{{\mathbb N}}
\newcommand{\R}{{\mathbb R}}
\newcommand{\C}{{\mathbb C}}
\newcommand{\diver}{\operatorname{div}}
\newcommand{\m}{\vec{\mu}}
\newtheorem{theorem}{Theorem}
\newtheorem{lemma}[theorem]{Lemma}
\newtheorem{remark}[theorem]{Remark}
\begin{document}

\title[Spin-polarized drift-diffusion model]{Large-time asymptotics for a matrix
spin drift-diffusion model}

\author[P. Holzinger]{Philipp Holzinger}
\address{Institute for Analysis and Scientific Computing, Vienna University of
	Technology, Wiedner Hauptstra\ss e 8--10, 1040 Wien, Austria}
\email{philipp.holzinger@tuwien.ac.at}

\author[A. J\"ungel]{Ansgar J\"ungel}
\address{Institute for Analysis and Scientific Computing, Vienna University of
	Technology, Wiedner Hauptstra\ss e 8--10, 1040 Wien, Austria}
\email{juengel@tuwien.ac.at}

\date{\today}

\thanks{The authors acknowledge partial support from
the Austrian Science Fund (FWF), grants P30000, W1245, and F65.}

\begin{abstract}
The large-time asymptotics of the density matrix solving a
drift-diffusion-Poisson model for the spin-polarized electron transport
in semiconductors is proved. The equations are analyzed in a bounded
domain with initial and Dirichlet boundary conditions. If the
relaxation time is sufficiently small and the boundary data is close to
the equilibrium state, the density matrix converges exponentially fast
to the spinless near-equilibrium steady state.
The proof is based on a reformulation of the
matrix-valued cross-diffusion equations using spin-up and spin-down densities
as well as the perpendicular component of the spin-vector density,
which removes the cross-diffusion terms.
Key elements of the proof are time-uniform positive lower and upper bounds
for the spin-up and spin-down densities, derived from the De Giorgi--Moser
iteration method, and estimates of the relative free energy for the
spin-up and spin-down densities.
\end{abstract}

\keywords{Spin-polarized transport in semiconductors, drift-diffusion
equations, density matrix, exponential decay, large-time asymptotics, free energy.}

\subjclass[2000]{35K51, 35B40, 82D37}

\maketitle


\section{Introduction}

Semiconductor lasers and transistor devices may be improved by taking into account
spin-polarized electron injection. The corresponding semiconductor models should
include the spin effects in an accurate way. A widely used model are the
two-component spin drift-diffusion equations \cite{FMESZ07},
which can be derived for strong spin-orbit coupling from the spinorial
Boltzmann equation in the diffusion limit \cite{ElH14},
describing the dynamics of the spin-up and spin-down electrons.
When the spin-orbit coupling is only moderate, the diffusion limit in the
spinorial Boltzmann equation leads to a matrix spin drift-diffusion model
for the electron density matrix \cite{ElH14,PoNe11}. This model contains much more
information than the two-component model, but the strong coupling between the
four spin components makes the mathematical analysis very challenging.
The existence of global weak solutions was shown in \cite{JNS15}.
In this paper, we investigate the large-time asymptotics of the density matrix
towards a near-equilibrium steady state.

\subsection{Model equations}

We assume that
the dynamics of the (Hermitian) density matrix $N(x,t)\in\C^{2\times 2}$, the
current density matrix $J(x,t)\in\C^{2\times 2}$, and the electric potential $V(x,t)$
is given by the (scaled) matrix equations
\begin{align}
  & \pa_t N - \diver J + \mathrm{i}\gamma[N,\m\cdot\vec{\sigma}]
	= \frac{1}{\tau}\bigg(\frac12\operatorname{tr}(N)\sigma_0 - N\bigg), \label{1.eq1} \\
	& J = DP^{-1/2}(\na N+N\na V)P^{-1/2}, \label{1.eq2} \\
	& -\lambda^2\Delta V = \operatorname{tr}(N) - g(x) \quad\mbox{in }\Omega,\ t>0,
	\label{1.eq3}
\end{align}
where $[A,B]=AB-BA$ is the commutator for matrices $A$ and $B$. The (scaled)
physical parameters are the strength of the pseudo-exchange field $\gamma>0$,
the normalized precession vector $\m =(\mu_1,\mu_2,\mu_3)\in\R^3$,
the spin-flip relaxation time $\tau>0$, the diffusion constant $D>0$,
the Debye length $\lambda>0$, and the doping concentration $g(x)$.
Equation \eqref{1.eq3} is the Poisson equation for the electric potential
\cite{Jue09}. The precession vector plays the role
of the local direction of the magnetization in the ferromagnet, and we assume
that it is constant. This assumption is crucial for our analysis.
Furthermore, $P=\sigma_0+p\m \cdot\vec\sigma
=\sigma_0+p(\mu_1\sigma_1+\mu_2\sigma_2+\mu_3\sigma_3)$
is the matrix of spin polarization of the scattering rates,
where $\sigma_0$ is the unit matrix in $\C^{2\times 2}$, $\vec\sigma=(\sigma_1,
\sigma_2,\sigma_3)$ is the vector of Pauli matrices, and $p\in[0,1)$ represents
the spin polarization. The number $\mathrm{i}$ is the complex unit, and $\mbox{tr}(N)$
denotes the trace of the matrix $N$.
The commutator $[N,\m \cdot\vec\sigma]$
models the precession of the spin polarization. The right-hand side in
\eqref{1.eq1} describes the spin-flip relaxation of the spin density to the
(spinless) equilibrium state.

Equations \eqref{1.eq1}--\eqref{1.eq3} are solved in the bounded domain
$\Omega\subset\R^3$ with time $t>0$ and are supplemented with
the boundary and initial conditions
\begin{equation}
  N = \frac12n_D\sigma_0,\ V=V_D\quad\mbox{on }\pa\Omega,\ t>0, \quad
  N(0) = N^0 \quad\mbox{in }\Omega. \label{1.bic}
\end{equation}
This means that no spin effects occur on the boundary.
For simplicity, we choose time-independent boundary data; see \cite{ZaJu13}
for boundary data depending on time. Mixed Diri\-chlet--Neumann boundary
conditions may be also considered as long as they allow for $W^{2,q_0}(\Omega)$
elliptic regularity results, which restricts the geometry of $\pa\Omega$.
Therefore, we have chosen pure Dirichlet boundary data as in \cite{JNS15}.

The density matrix $N$ can be expressed in terms of the Pauli matrix according to
$N = \frac12 n_0\sigma_0 + \vec n\cdot\vec\sigma$ and $\vec n=(n_1,n_2,n_3)$ is called
the spin-vector density. Model \eqref{1.eq1}--\eqref{1.eq2}, written in the
four variables $n_0,\ldots,n_3$, is a cross-diffusion system with the constant
diffusion matrix
$$
  \frac{D}{1-p^2}\begin{pmatrix}
	1 & -p\m ^\top \\ -p\m  & \eta\mathbb{I} + (1-\eta)\m \otimes\m
	\end{pmatrix} \in \R^{4\times 4},
$$
where $\mathbb{I}$ is the unit matrix in $\R^{3\times 3}$. Although this matrix
is symmetric and positive definite, the strong coupling complicates the analysis
of system \eqref{1.eq1}--\eqref{1.eq2}, because maximum principle arguments
and other standard tools cannot be (easily) applied.

The spin polarization matrix couples the charge and spin components of the
electrons. If $p=0$, we recover the classical Van-Roosbroeck drift-diffusion
equations for the electron charge density $n_0$ \cite{Mar86,Van50},
\begin{equation}\label{1.dd}
  \pa_t n_0 - D_p\diver J_0 = 0, \quad J_0 = \na n_0+n_0\na V, \quad
	-\lambda^2\Delta V = n_0 - g(x),
\end{equation}
where $J_0$ denotes the charge current density and $D_p=2D/(1-p^2)$.
The boundary conditions are $n_0=n_D$ and $V=V_D$ on $\pa\Omega$
and the initial condition is $n_0(0)=n_0^0$ in $\Omega$,
where $N^0 = \frac12 n_0^0\sigma_0 + \vec n^0\cdot\vec\sigma$. Another special case is
given by the two-component spin drift-diffusion model. The spin-up
and spin-down densities $n_\pm = \frac12 n_0\pm \vec n\cdot\m $, respectively,
satisfy the equations
\begin{align}
  & \pa_t n_+ - \diver\big(D_+(\na n_+ + n_+\na V)\big) = \frac{1}{2\tau}(n_- - n_+),
	\label{1.nplus} \\
	& \pa_t n_- - \diver\big(D_-(\na n_- + n_-\na V)\big) = \frac{1}{2\tau}(n_+ - n_-),
	\label{1.nminus} \\
	& n_\pm = \frac{n_D}{2},\quad V=V_D\quad\mbox{on }\pa\Omega, \quad
	n_\pm(0)=\frac12 n_0^0\pm\vec n^0\cdot\m \quad\mbox{in }\Omega, \label{1.bic2}
\end{align}
where $D_\pm=D/(1\pm p)$.
These equations are weakly coupled through the relaxation term.

Model \eqref{1.eq1}--\eqref{1.eq2} was derived in \cite{PoNe11}
from a matrix Boltzmann equation in the diffusion limit. The scattering operator
in the Boltzmann model is assumed to consist of a dominant collision operator from
the Stone model and a spin-flip relaxation operator. When the scattering rate
in the Stone model is smooth and invariant under isometric transformations,
the diffusion $D$ can be identified with a positive number \cite[Prop.~1]{Pou91}.

\subsection{State of the art}

The first result on the global existence of solutions to the Van-Roosbroeck
equations \eqref{1.dd} (for electrons and positively charged holes)
was proved by Mock \cite{Moc74}. He showed in \cite{Moc75} that
the solution decays exponentially fast to the equilibrium state provided that the initial
data is sufficiently close to the equilibrium. These results were generalized
under physically more realistic assumptions on the boundary by Gajewski
\cite{Gaj85} and Gajewski and Gr\"oger \cite{GaGr86,GaGr89}.
Further large-time asymptotics can be found in \cite{AMT00} for the whole-space
problem and in \cite{BMV04}, where the diffusion constant was replaced by a
diffusion matrix. Moreover, in \cite{DiWu08},
the stability of the solutions in Wasserstein spaces was investigated.

Convergence rates of the whole-space solutions to their self-similar profile were
investigated intensively in the literature. In \cite{BiDo00}, the relative
free energy allowed the authors to prove the self-similar asymptotics in the
$L^1(\R^d)$ norm. The results were improved in \cite{KoKa08}, showing optimal
$L^p(\R^d)$ decay estimates.
The asymptotic profile to drift-diffusion-Poisson equations
with fractional diffusion was analyzed in \cite{OgYa09,Yam12}.

Concerning drift-diffusion models for the spin-polarized electron transport,
there are only few mathematical results.
The stationary two-component drift-diffusion model
\eqref{1.nplus}--\eqref{1.nminus}
was analyzed in \cite{GlGa10}, while the transient equations were investigated in
\cite{Gli08}. In particular, Glitzky proved in \cite{Gli08} the
exponential decay to equilibrium. An existence analysis for a diffusion model for
the spin accumulation with fixed electron current but non-constant magnetization
was proved in \cite{PuGu10} in one space dimension and in \cite{GaWa07,GaWa15}
for three space dimensions.

Also quantum spin diffusion models have been considered. For instance,
in \cite{ZaJu13}, the large-time asymptotics for a simple spin drift-diffusion
system for quantum electron transport in graphene was studied.
A more general quantum spin drift-diffusion model was derived in \cite{BaMe10},
with numerical experiments in \cite{BMNP15}. Numerical simulations
for diffusion models for the spin accumulation, coupled with the
Landau--Lifshitz--Gilbert equation, can be found in \cite{ARBVHPS15,RAHSP16}.
For spin transport models in superlattices, we refer, for instance, to \cite{BBA10}.

The existence of global weak solutions to the matrix spin
drift-diffusion model \eqref{1.eq1}--\eqref{1.bic}
was shown in \cite{JNS15} with constant
precession vector and in \cite{Zam14} with non-constant precession vector
but assuming velocity saturation. Under the condition that the (thermal)
equilibrium density is sufficiently small, the exponential decay to equilibrium
was proved in \cite{Zam14}.
An implicit Euler finite-volume scheme that preserves some of the features of
the continuous model was analyzed in \cite{CJS16}. The numerical
results of \cite{CJS16} indicate that the relative free energy is
decaying with exponential rate, but no analytical proof was given.

In this paper, we prove that the solution $(N(t),V(t))$
to \eqref{1.eq1}--\eqref{1.bic} converges exponentially
fast to a steady state $(\frac12n_\infty\sigma_0,V_\infty)$, solving the stationary
spinless drift-diffusion-Poisson equations
\begin{align}
  & \diver(\na n_\infty + n_\infty\na V_\infty) = 0, \quad
	-\lambda^2 \Delta V_\infty = n_\infty - g(x) \quad\mbox{in }\Omega \label{1.stat1} \\
	& n_\infty = n_D, \quad V=V_D\quad\mbox{on }\pa\Omega, \label{1.stat2}
\end{align}
under the condition that the boundary data is close to the (thermal)
equilibrium state, defined by $\log n_D+V_D=0$ on $\pa\Omega$.
Compared to \cite{Zam14}, where $\|n_\infty\|_{L\infty(\Omega)}\ll 1$ is needed,
our smallness assumption is physically reasonable;
see the discussion in the following subsection.

\subsection{Main result and key ideas}

Our main result is as follows.

\begin{theorem}[Exponential time decay]\label{thm.time}
Let $T>0$ and let $\Omega\subset\R^3$ be a bounded domain with $\pa\Omega\in C^{1,1}$.
Furthermore, let $0<m_*<1$, $\lambda>0$, $\gamma>0$, $D>0$, $0\le p<1$, $q_0>3$,
and $\m \in\R^3$ with $|\m |=1$.
The data satisfies $g\in L^\infty(\Omega)$, $g\ge 0$ in $\Omega$, and
\begin{align*}
  & n_D,\ V_D\in W^{2,q_0}(\Omega), \quad n_D\ge m_*>0\mbox{ on }\pa\Omega, \\
  & n_0^0,\, \vec{n}^0\cdot\m \in L^\infty(\Omega), \quad
  \frac12 n_0^0\pm\vec{n}^0\cdot\m \ge \frac{m_*}{2} > 0.
\end{align*}
Let $\phi_D:=\log n_D+V_D$.
Then there exist $\kappa>0$, $C_0>0$, and $\delta>0$ such that if
$\|\phi_D\|_{W^{2,q_0}(\Omega)}\le\delta$,
$$
  \|n_\pm(t)-\tfrac12 n_\infty\|_{L^2(\Omega)}
	+ \|V(t)-V_\infty\|_{H^1(\Omega)}\le C_0e^{-\kappa t}, \quad t>0,
$$
where $(n_\infty,V_\infty)$ is the weak solution to \eqref{1.stat1}--\eqref{1.stat2}.
Furthermore, there exists $\tau_0>0$ such that if $0<\tau\le\tau_0$ then
$$
  \|N(t)-\tfrac12 n_\infty\sigma_0\|_{L^2(\Omega;\C^{2\times 2})}
	\le C_0^*e^{-\kappa^* t}, \quad t>0,
$$
and $C_0$, $C_0^*>0$ depend on the initial relative free energy $H(0)$
(see \eqref{1.H} below).
\end{theorem}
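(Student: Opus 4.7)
The plan is to split equations \eqref{1.eq1}--\eqref{1.eq2} into a scalar block for the spin-up/down densities $n_\pm=\tfrac12 n_0\pm\vec{n}\cdot\m$ and a vector block for the perpendicular spin component $\vec{n}_\perp:=\vec{n}-(\vec{n}\cdot\m)\m$. Since $\m$ is constant and $P^{-1/2}$ is a polynomial in $\m\cdot\vec\sigma$, projecting \eqref{1.eq1} along and perpendicular to $\m$ decouples the cross-diffusion: $n_\pm$ satisfy the two-component system \eqref{1.nplus}--\eqref{1.nminus} with diffusion coefficients $D_\pm=D/(1\pm p)$, while $\vec{n}_\perp$ satisfies a diagonal linear diffusion equation with homogeneous Dirichlet data, an extra damping $-\vec{n}_\perp/\tau$ from the relaxation, and a rotation from $\mathrm{i}\gamma[N,\m\cdot\vec\sigma]$ that is skew-symmetric in any $L^2$ energy. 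It then suffices to control each block separately and reassemble $N=\tfrac12 n_0\sigma_0+\vec{n}\cdot\vec\sigma$.

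Step 1 is to establish time-uniform bounds $0<m_\star\le n_\pm(x,t)\le M_\star<\infty$ by a De Giorgi--Moser iteration. The hypothesis $q_0>3$ together with $W^{2,q_0}$ elliptic regularity for \eqref{1.eq3} yields $\na V\in L^\infty_x$ once $\operatorname{tr}(N)\in L^\infty$, which is obtained by a bootstrap: the upper bound comes from truncating $(n_\pm-k)_+$ and exploiting that the sum $n_++n_-=n_0$ is stable under the relaxation operator, while the lower bound comes from truncating $(m-n_\pm)_+$ and using $n_\pm|_{\pa\Omega}=n_D/2\ge m_*/2$ together with the boundedness of the relaxation rate $1/(2\tau)$. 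Because the diffusion is diagonal in the $(n_+,n_-)$ variables, the iteration runs essentially as in the scalar Van-Roosbroeck case.

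Step 2 is the relative free energy estimate for the reduced system $(n_+,n_-,V)$. With $(n_\infty,V_\infty)$ the weak solution of \eqref{1.stat1}--\eqref{1.stat2}, set
$$
H(t):=\sum_{\pm}\int_\Omega\Big(n_\pm\log\tfrac{2n_\pm}{n_\infty}-n_\pm+\tfrac{n_\infty}{2}\Big)dx+\frac{\lambda^2}{2}\int_\Omega|\na(V-V_\infty)|^2 dx.
$$
Differentiating along the flow and integrating by parts produces a Fisher-information dissipation $\sum_\pm D_\pm\int_\Omega n_\pm|\na(\log(2n_\pm/n_\infty)+V-V_\infty)|^2 dx$, a non-negative relaxation term $(2\tau)^{-1}\int_\Omega(n_+-n_-)(\log n_+-\log n_-)dx$, and boundary contributions proportional to $\phi_D=\log n_D+V_D$. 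The uniform bounds of Step 1 allow a weighted Poincar\'e inequality to yield $dH/dt\le -2\kappa H(t)$, provided $\|\phi_D\|_{W^{2,q_0}(\Omega)}\le\delta$ is small enough that the boundary contributions are absorbed. The Csisz\'ar--Kullback--Pinsker inequality then converts $H(t)\le H(0)e^{-2\kappa t}$ into the $L^2$ decay of $n_\pm-n_\infty/2$, and elliptic regularity applied to the difference of Poisson equations transfers this to the $H^1$ decay of $V-V_\infty$.

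For the matrix estimate I test the $\vec{n}_\perp$-equation against itself in $L^2$: the commutator contribution vanishes, since $\mathrm{i}[N,\m\cdot\vec\sigma]$ has no component along $\vec{n}_\perp\cdot\vec\sigma$ in the trace inner product, the diffusion is coercive, and the relaxation contributes $-\tau^{-1}\|\vec{n}_\perp\|_{L^2}^2$. The lower-order coupling through $\na V$ is absorbed as soon as $\tau\le\tau_0$ is small, giving $\|\vec{n}_\perp(t)\|_{L^2}\le C e^{-t/(2\tau)}$. Combined with Step 2, this yields the stated decay of $N-\tfrac12 n_\infty\sigma_0$ at rate $\kappa^*=\min(\kappa,1/(2\tau))$. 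The hard part will be Step 1: closing the De Giorgi--Moser iteration with constants \emph{uniform in time} despite the self-consistent drift $\na V$ and the relaxation coupling between $n_+$ and $n_-$, and in particular producing the strictly positive lower bound $m_\star$ on which the relative entropy machinery of Step 2 crucially depends.
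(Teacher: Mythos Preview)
Your strategy matches the paper's exactly: decompose into $(n_+,n_-,\vec n_\perp)$, obtain time-uniform two-sided bounds for $n_\pm$, run a relative free energy argument on the scalar block, and close with an $L^2$ estimate on $\vec n_\perp$. A few of the mechanisms you sketch, however, differ from what actually carries the argument.

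For the lower bound, plain truncation with $(m-n_\pm)_+$ only yields a time-\emph{decaying} barrier $n_\pm\ge m_0 e^{-\mu t}$, because after integrating the drift by parts the sign of $n_0-g$ is uncontrolled. The paper uses that preliminary positivity merely to make sense of the test function $e^{t}w_\pm^{q-1}/n_\pm$ with $w_\pm=-(\log n_\pm+m)^-$, and then runs the De Giorgi--Moser iteration in $q$ on this \emph{logarithmic} variable; the weight $e^{t}$ is what renders the resulting $L^\infty$ bound on $w_\pm$ time-independent (without it the right-hand side of the iteration inequality grows linearly in $t$). In Step~2 there are no genuine boundary contributions, since all test functions vanish on $\pa\Omega$; the residual to be absorbed by $\|\phi_D\|_{W^{2,q_0}}\le\delta$ is the \emph{bulk} term $\int_\Omega(n_\pm-\tfrac12 n_\infty)^2|\na\phi_\infty|^2\,dx$ coming from the nonzero steady current, controlled via the elliptic estimate $\|\na\phi_\infty\|_{L^\infty}\le C\|\phi_D\|_{W^{2,q_0}}$. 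Csisz\'ar--Kullback--Pinsker is not used (it would only give $L^1$): the two-sided bounds on $n_\pm$ and $n_\infty$ give $H\ge c\|n_\pm-\tfrac12 n_\infty\|_{L^2}^2$ directly by Taylor expansion of $h(\cdot\,|\,n_\infty)$. For $\vec n_\perp$, the drift is handled by integrating $\na|\vec n_\perp|^2\cdot\na V$ by parts and invoking the Poisson equation, which produces the explicit threshold $\tau_0=2\eta\lambda^2/(D\|g\|_{L^\infty})$; the decay rate then comes from Poincar\'e on the diffusion term rather than from $1/\tau$.
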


The smallness condition on $\phi_D$ means that the system is close to equilibrium,
as $\phi_D=0$ characterizes the (thermal) equilibrium state. Since the stationary
drift-diffusion equations \eqref{1.stat1} may possess multiple solutions if
$\phi_D$ is large in a certain sense \cite{Ala95,Moc75}, the condition on
$\phi_D$ is not surprising. The smallness condition on the relaxation time,
however, seems to be purely technical. It is needed to estimate the drift part when
we derive $L^2(\Omega)$ bounds for the perpendicular component of $\vec n$.
If an entropy structure exists for the equation
for $\vec n$, we expect that this condition can be avoided but currently, such
a structure is not clear; see \cite[Remarks 3.1--3.2]{JNS15}. If the
initial spin-vector density is parallel to the precession vector, we are able
to remove the smallness condition on $\tau$; see Remark \ref{rem.tau}.
We show in Remark \ref{rem.smalltau} that, independently of the initial
spin-vector density, the smallness condition is satisfied
in a certain physical regime.

The analysis of the asymptotic behavior of the solutions to
the Van-Roosbroeck drift-diffusion system
\eqref{1.dd} and the two-component system \eqref{1.nplus}--\eqref{1.nminus}
is based on the observation that the relative free energy, consisting of the
internal and electric energies, is a Lyapunov functional along the
solutions and that the energy dissipation can be bounded from below in terms
of the relative free energy itself. The strong coupling of \eqref{1.eq1}-\eqref{1.eq2}
prohibits this approach. Indeed, we showed in \cite[Section 3]{JNS15} that
the relative free energy associated to \eqref{1.eq1}-\eqref{1.eq2},
consisting of the von Neumann energy and the electric energy, is nonincreasing in time
only in very particular cases.

Our idea is the observation that the matrix system \eqref{1.eq1}--\eqref{1.eq2}
can be reformulated as drift-diffusion-type equations in terms of certain
projections of the density matrix relative to the precession vector. This idea
was already used in \cite{JNS15} for the existence analysis. The reformulation
removes the cross-diffusion terms, which allows us to apply the techniques
of Gajewski and Gr\"oger \cite{GaGr89} used for the Van-Roosbroeck model.
This idea only works if the precession vector $\m $ is constant.
A non-constant vector
$\m $ (solving the Landau--Lifshitz--Gilbert equation) was considered
in \cite{ZaJu16}, but this spin model is simplified and no large-time asymptotics
was proved.

More precisely, we decompose the density matrix
$N=\frac12 n_0\sigma_0 + \vec n\cdot\vec\sigma$ and
$N^0=\frac12 n_0^0\sigma_0 + \vec n^0\cdot\vec\sigma$.
Then the spin-up and spin-down
densities $n_\pm = \frac12 n_0\pm \vec n\cdot\m $ solve
\eqref{1.nplus}--\eqref{1.bic2}.
The information on $n_\pm$ is not sufficient to recover the density matrix.
Therefore, we also consider the perpendicular component of
$\vec n$ with respect to $\m $, $\vec n_\perp = \vec n-(\vec n\cdot\m )\m $,
which solves
\begin{equation}
  \pa_t\vec n_\perp - \diver\bigg(\frac{D}{\eta}(\na\vec n_\perp
	+ \vec n_\perp\na V)\bigg) - 2\gamma(\vec n_\perp\times\m )
	= -\frac{\vec n_\perp}{\tau}, \label{1.nperp}
\end{equation}
where $\eta=\sqrt{1-p^2}$, with
the boundary and initial conditions $\vec n_\perp=0$ on $\pa\Omega$ and
$\vec n_\perp = \vec n^0 - (\vec n^0\cdot\m )\m $.
The density matrix can be reconstructed from $(n_+,n_-,\vec n_\perp)$ by setting
$n_0=n_+ + n_-$ and $\vec n=\vec n_\perp + (\vec n\cdot\m )\m
= \vec n_\perp + \frac12(n_+ - n_-)\m $.

A key element of the proof is the derivation of a uniform positive lower
bound for $n_\pm$. This is shown by using the De Giorgi--Moser iteration method
inspired by the proof of \cite[Lemma 3.6]{GaGr89}. More precisely, we choose
the test functions $e^{t}w_\pm^{q-1}/n_\pm$ in \eqref{1.nplus} and \eqref{1.nminus},
respectively, where $w_\pm = -\min\{0,\log n_\pm+m\}$ with $m>0$, $q\in\N$, and
pass to the limit $q\to\infty$, leading to $\|w_\pm(t)\|_{L^\infty(\Omega)}
\le K$ and consequently to the desired bound $w_\pm(t)\ge e^{-m-K}$ in $\Omega$.
Second, we calculate the time derivative of the free energy
\begin{equation}\label{1.H}
  H(t) = \int_\Omega\bigg(h(n_+|n_\infty) + h(n_-|n_\infty)
	+ \frac{\lambda^2}{2}|\na(V-V_\infty)|^2\bigg)(t)dx,
\end{equation}
where $h(n_\pm|n_\infty)=n_\pm\log(2n_\pm/n_\infty)-n_\pm+\tfrac12 n_\infty$,
leading to the free energy inequality
\begin{align}
  \frac{dH}{dt} &+ C_1\int_\Omega\big(n_+|\na(\phi_+-\phi_D)|^2
	+ n_-|\na(\phi_--\phi_D)|^2\big) \nonumber \\
	&\le C_2\|\na\phi_D\|_{L^\infty(\Omega)}^2
	\int_\Omega\big((n_+-\tfrac12 n_\infty)^2
	+ (n_--\tfrac12 n_\infty)^2\big)dx, \label{1.dHdt}
\end{align}
where $\phi_\pm=\log n_\pm+V$ are the electrochemical potentials
and $C_1>0$ and $C_2>0$ are some constants independent of the solution
and independent of time. The right-hand side can be estimated, up to a factor,
by the free energy $H$ times $\|\na\phi_D\|_{L^\infty(\Omega)}^2$. Furthermore,
using the time-uniform positive lower bound for $n_\pm$, the energy dissipation
(the second term on the left-hand side of \eqref{1.dHdt}) is bounded from below
by $H$, up to a factor. Therefore, if $\|\na\phi_D\|_{L^\infty(\Omega)}\le\delta$,
\eqref{1.dHdt} becomes, for some time-independent constants $C_3>0$ and $C_4>0$,
$$
  \frac{dH}{dt} + (C_3-C_4\delta^2)H \le 0.
$$
Choosing $\delta^2<C_3/C_4$, the Gronwall inequality implies the exponential decay
with respect to the free energy and, as a consequence,
in the $L^2(\Omega)$ norm of $n_\pm-n_\infty$ with rate $\kappa:=C_3-C_4\delta^2>0$.

Third, we prove the time decay of $\vec n_\perp$.
Since we are not aware of an entropy structure for \eqref{1.nperp},
we rely on $L^2(\Omega)$ estimates. This means that we use the test function
$\vec n$ in the weak formulation of \eqref{1.nperp} such that the term
$(\vec n\times\m )\cdot\vec n$ vanishes. However, in order to handle
the term coming from the doping concentration, we need a smallness
condition on the relaxation time $\tau>0$. Such a condition is not needed in the
Van-Rooosbroeck model.

The paper is organized as follows. The stationary equations are studied in
Section \ref{sec.steady}. In Section \ref{sec.unif}, we prove the lower and
upper uniform bounds for $n_\pm$, the entropy inequality, and some
bounds for the free energy and energy dissipation. Theorem \ref{thm.time}
is proved in Section \ref{sec.proof}. In the appendix, we prove a uniform $L^\infty$
bound for any function that satisfies an iterative inequality using the
De Giorgi--Moser method.


\section{The stationary equations}\label{sec.steady}

The existence of weak solutions to the stationary drift-diffusion problem
\begin{align}
  & \diver(\na n_\infty + n_\infty\na V_\infty) = 0, \quad
	-\lambda^2\Delta V_\infty = n_\infty - g(x)\quad\mbox{in }\Omega, \label{2.steady} \\
	& n_\infty=n_D, \quad V_\infty=V_D\quad\mbox{on }\pa\Omega \label{2.bc}
\end{align}
with data satisfying the assumptions in Theorem \ref{thm.time}
is well known; see \cite[Theorem 3.2.1]{Mar86}. The solution satisfies
$n_\infty$, $V_\infty\in H^1(\Omega)\cap L^\infty(\Omega)$ and
\begin{equation}\label{2.minfty}
  0<m_\infty\le n_\infty\le M_\infty\quad\mbox{in }\Omega
\end{equation}
for some $m_\infty$, $M_\infty>0$. Note that we cannot expect uniqueness of
weak solutions in general, since there are devices (thyristors) that allow
for multiple physical stationary solutions. However, uniqueness can be
expected for data sufficiently close to the (thermal)
equilibrium state \cite{Ala95,Moc82}.
We call a solution to \eqref{2.steady}--\eqref{2.bc} a (thermal) equilibrium state if
the electrochemical potential
$\phi_\infty:= \log n_\infty+V_\infty$ vanishes in $\Omega$.
This state needs the compatibility condition $\phi_D:=\log n_D+V_D=0$ on $\pa\Omega$.

The following lemma provides some a priori estimates for $(n_\infty,V_\infty)$ and
shows that the current density $J_\infty:=n_\infty\na\phi_\infty$ is arbitrarily
small in the $L^\infty(\Omega)$ norm if the boundary data is sufficiently
close to the equilibrium state $\phi_D=0$ in the $W^{2,q_0}(\Omega)$ sense.

\begin{lemma}[A priori estimates]\label{lem.stat}
Under the assumptions of Theorem \ref{thm.time}, there
exists a constant $C_\infty>0$ independent of $(n_\infty,V_\infty)$ such that
$$
  \|\na\phi_\infty\|_{L^\infty(\Omega)}	\le C_\infty\|\phi_D\|_{W^{2,q_0}(\Omega)}.
$$
\end{lemma}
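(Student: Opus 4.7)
\emph{Proof plan.} The starting point is that, since $\phi_\infty=\log n_\infty+V_\infty$, the product-rule identity $n_\infty\na\phi_\infty=\na n_\infty+n_\infty\na V_\infty$ recasts the stationary drift-diffusion equation~\eqref{2.steady} as the linear elliptic problem $-\diver(n_\infty\na\phi_\infty)=0$ in $\Omega$ with Dirichlet data $\phi_\infty=\phi_D$ on $\pa\Omega$. Setting $\psi:=\phi_\infty-\phi_D\in H^1_0(\Omega)$, this becomes
\[
-\diver(n_\infty\na\psi)=\diver(n_\infty\na\phi_D)=\na n_\infty\cdot\na\phi_D+n_\infty\Delta\phi_D\quad\text{in }\Omega,\qquad \psi=0\ \text{on }\pa\Omega.
\]
Since the Sobolev embedding $W^{2,q_0}(\Omega)\hookrightarrow W^{1,\infty}(\Omega)$ is available for $q_0>3$, the plan is to derive a $W^{2,q_0}$-bound on $\psi$ that is linear in $\|\phi_D\|_{W^{2,q_0}(\Omega)}$; combining this with $\|\na\phi_\infty\|_{L^\infty}\le\|\na\psi\|_{L^\infty}+\|\na\phi_D\|_{L^\infty}$ then yields the claim.

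For standard $W^{2,q_0}$ elliptic regularity to apply, the coefficient $n_\infty$ must be sufficiently regular, which I would obtain by bootstrap. The Poisson equation $-\lambda^2\Delta V_\infty=n_\infty-g\in L^\infty(\Omega)$ with $V_D\in W^{2,q_0}(\Omega)$ on the $C^{1,1}$ boundary yields $V_\infty\in W^{2,q_0}(\Omega)$, so $\na V_\infty\in L^\infty(\Omega)$. Rewriting the equation for $n_\infty$ as $-\Delta n_\infty-\na V_\infty\cdot\na n_\infty=n_\infty\Delta V_\infty$, a uniformly elliptic second-order equation with $L^\infty$ drift and $L^\infty$ source, together with the boundary datum $n_D\in W^{2,q_0}(\Omega)$, a second application of elliptic regularity gives $n_\infty\in W^{2,q_0}(\Omega)\hookrightarrow W^{1,\infty}(\Omega)$. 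Combined with the strict positivity $n_\infty\ge m_\infty>0$ from~\eqref{2.minfty}, this makes the divergence-form operator $\psi\mapsto-\diver(n_\infty\na\psi)$ uniformly elliptic with Lipschitz principal coefficient.

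With these ingredients, $W^{2,q_0}$ regularity for the equation satisfied by $\psi$ on the $C^{1,1}$ domain $\Omega$---whose right-hand side lies in $L^{q_0}(\Omega)$ with norm bounded by $C\|n_\infty\|_{W^{1,\infty}(\Omega)}\|\phi_D\|_{W^{2,q_0}(\Omega)}$---produces $\|\psi\|_{W^{2,q_0}(\Omega)}\le C\|\phi_D\|_{W^{2,q_0}(\Omega)}$, and the Sobolev embedding completes the proof. The main technical obstacle is ensuring that all constants---the bounds $m_\infty,M_\infty$ in~\eqref{2.minfty}, the bootstrap $W^{2,q_0}$-norm of $n_\infty$, and the elliptic-regularity constant for the equation for $\psi$---depend only on the data $(n_D,V_D,g,m_*,\lambda,q_0,\Omega)$ and not on the particular solution branch $(n_\infty,V_\infty)$, since the stationary system may admit multiple solutions and the lemma asserts uniformity across them.
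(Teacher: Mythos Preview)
Your proposal is correct and follows essentially the same approach as the paper: bootstrap elliptic regularity to obtain $V_\infty,n_\infty\in W^{2,q_0}(\Omega)$, then rewrite the stationary equation as $\diver\bigl(n_\infty\na(\phi_\infty-\phi_D)\bigr)=-\diver(n_\infty\na\phi_D)$ and apply $W^{2,q_0}$ regularity together with the embedding $W^{2,q_0}\hookrightarrow W^{1,\infty}$. The paper additionally inserts an explicit $H^1$ estimate for $n_\infty$ (testing with $n_\infty-n_D$) before the $W^{2,q_0}$ bootstrap, which you may wish to include so that the $W^{2,q_0}$ bound on $n_\infty$ is visibly controlled by the data; your observation about uniformity of constants across solution branches is well taken and is handled through the data-dependent bounds $m_\infty,M_\infty$.
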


\begin{proof}
Since $n_\infty-g(x)\in L^\infty(\Omega)$, elliptic regularity yields
$V_\infty\in W^{2,q_0}(\Omega)$, and the $W^{2,q_0}(\Omega)$ norm of $V_\infty$
depends on $M_\infty$, $\|V_D\|_{W^{2,q_0}(\Omega)}$, and $\|g\|_{L^\infty(\Omega)}$.
Using the test function $n_\infty-n_D$ in the weak formulation of the first
equation in \eqref{2.steady}, we find that
\begin{align*}
  \int_\Omega|\na(n_\infty-n_D)|^2 dx
	&= -\int_\Omega\na n_D\cdot\na(n_\infty-n_D)dx
	- \int_\Omega n_\infty\na V_\infty\cdot\na(n_\infty-n_D)dx \\
	&\le \big(\|\na n_D\|_{L^2(\Omega)} + M_\infty\|\na V_\infty\|_{L^2(\Omega)}\big)
	\|\na(n_\infty-n_D)\|_{L^2(\Omega)}.
\end{align*}
This implies that
\begin{align*}
  \|\na n_\infty\|_{L^2(\Omega)}
	&\le 2\|\na n_D\|_{L^2(\Omega)}
	+ M_\infty\|\na V_\infty\|_{L^2(\Omega)} \\
	&\le C\big(1 + \|\na n_D\|_{L^2(\Omega)} + \|\na V_D\|_{L^2(\Omega)}\big).
\end{align*}
Since $q_0>3$, we have $W^{2,q_0}(\Omega)\hookrightarrow W^{1,\infty}(\Omega)$.
Thus, $b:=\na V_\infty\in L^\infty(\Omega)$ and elliptic regularity for
$$
  \Delta n_\infty + b\cdot\na n_\infty = \lambda^{-2}n_\infty(n_\infty-g(x))
	\in L^\infty(\Omega)
$$
shows that $n_\infty\in W^{2,q_0}(\Omega)$ with an a priori bound depending on the
norms $\|n_D\|_{W^{2,q_0}(\Omega)}$ and $\|n_\infty\|_{H^1(\Omega)}$. Summarizing,
$$
  \|n_\infty\|_{W^{2,q_0}(\Omega)} + \|V_\infty\|_{W^{2,q_0}(\Omega)} \le C.
$$

It holds that $W^{2,q_0}(\Omega)\hookrightarrow C^{0,\alpha}(\overline\Omega)$
for all $0<\alpha<1$. Hence, $n_\infty\in C^{0,\alpha}(\overline\Omega)$.
The first equation in \eqref{2.steady} can be formulated as
$$
  \diver\big(n_\infty\na(\phi_\infty-\phi_D)\big)
	= -\diver(n_\infty\na\phi_D)\in L^{q_0}(\Omega),
$$
which shows that, by elliptic regularity again,
$$
  \|\phi_\infty-\phi_D\|_{W^{2,q_0}(\Omega)}
	\le C(m_\infty,M_\infty)\|\phi_D\|_{W^{2,q_0}(\Omega)}
$$
and, in view of the continuous embedding
$W^{2,q_0}(\Omega)\hookrightarrow W^{1,\infty}(\Omega)$,
$$
  \|\na\phi_\infty\|_{L^\infty(\Omega)}
	\le C\|\phi_\infty\|_{W^{2,q_0}(\Omega)}
	\le C\|\phi_D\|_{W^{2,q_0}(\Omega)}.
$$
This finishes the proof.
\end{proof}


\section{Uniform estimates}\label{sec.unif}

In this section, we prove some a priori estimates that are uniform in time.
A uniform upper bound for $n_\pm$ was already shown
in \cite[Theorem 1.1]{JNS15}. We present a slightly shorter proof than
that one presented in \cite{JNS15}.

\begin{lemma}[Uniform upper bound for $n_\pm$]\label{lem.upper}
Introduce
$$
  M = \max\bigg\{\sup_{\pa\Omega}n_D,\sup_\Omega n_0^0,\sup_\Omega g\bigg\}.
$$
Then
$$
  n_\pm(t) \le M\quad\mbox{in }\Omega,\ t>0.
$$
\end{lemma}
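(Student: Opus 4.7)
I will argue by a Stampacchia truncation applied simultaneously to \eqref{1.nplus} and \eqref{1.nminus}, with $w_\pm := (n_\pm - M)^+$ as test functions. These are admissible: on $\pa\Omega$ one has $n_\pm = n_D/2 \le M/2 \le M$, so $w_\pm$ vanishes on $\pa\Omega$; at $t=0$, positive semidefiniteness of the initial density matrix $N^0$ (which forces $|\vec n^0|\le \tfrac12 n_0^0$, hence $|\vec n^0\cdot\m|\le\tfrac12 n_0^0$) yields $n_\pm(0)\le n_0^0\le M$, so $w_\pm(0)\equiv 0$. A preliminary truncation with $(-n_\pm)^+$, or the global existence result from \cite{JNS15}, provides $n_\pm\ge 0$, which will be used below.

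The crucial step is to turn the drift term into a sign-definite quantity via the Poisson equation. Since $\na w_\pm = 0$ on $\{n_\pm\le M\}$, one has $n_\pm\na w_\pm = \na\bigl(\tfrac12 w_\pm^2 + M w_\pm\bigr)$; integrating by parts (the boundary contribution vanishes because $w_\pm=0$ on $\pa\Omega$) and substituting \eqref{1.eq3} gives
\[ D_\pm\int_\Omega n_\pm\na V\cdot\na w_\pm\,dx = \frac{D_\pm}{\lambda^2}\int_\Omega (n_++n_--g)\Bigl(\tfrac12 w_\pm^2+M w_\pm\Bigr)\,dx. \]
On $\{w_\pm>0\}$ we have $n_\pm>M\ge\sup_\Omega g$, so $n_\pm-g\ge 0$, and $n_\mp\ge 0$ from the preceding step; hence the integrand is non-negative.

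Summing the two tested equations and dropping the non-negative diffusive and drift contributions leaves
\[ \tfrac12\frac{d}{dt}\bigl(\|w_+\|_{L^2(\Omega)}^2+\|w_-\|_{L^2(\Omega)}^2\bigr) \le \frac{1}{2\tau}\int_\Omega(n_+-n_-)(w_--w_+)\,dx. \]
A pointwise case analysis shows that $(n_+-n_-)(w_--w_+)\le 0$ a.e.: on $\{n_+>M,\,n_->M\}$ it equals $-(n_+-n_-)^2$; on $\{n_+>M\ge n_-\}$ both $n_+-n_-$ and $w_+$ are non-negative, so $(n_+-n_-)(-w_+)\le 0$; the remaining case is symmetric. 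Gronwall combined with $w_\pm(0)\equiv 0$ then forces $w_\pm\equiv 0$, i.e.\ $n_\pm(t)\le M$ a.e.\ in $\Omega$ for all $t>0$.

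The main technical obstacle is to render the drift term sign-definite, which is precisely what the Poisson reformulation together with the hypothesis $M\ge\sup_\Omega g$ achieves. The relaxation coupling, which a priori could transfer mass between the two spin components, turns out to be harmless because it respects the common upper threshold $M$, as the case analysis above confirms.
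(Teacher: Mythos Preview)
Your proof is correct and takes a somewhat different route from the paper's. The paper does not test \eqref{1.nplus}--\eqref{1.nminus} separately; instead it passes to the total charge density $n_0=n_++n_-$, argues that $n_0$ solves a single drift-diffusion equation in which the relaxation terms have cancelled, tests that equation with $(n_0-M)^+$, and deduces $n_0\le M$; the bound on $n_\pm$ then follows from $0\le n_\pm\le n_0$. That route sidesteps the relaxation coupling entirely and hence needs no pointwise case analysis. Your approach stays at the level of $n_\pm$ and absorbs the coupling via the monotonicity observation $(n_+-n_-)(w_--w_+)\le 0$, which is exactly the statement that the relaxation mechanism cannot push either spin component above a common threshold. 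Both arguments treat the drift term identically---rewrite $n_\pm\na w_\pm$ as a full gradient, integrate by parts, insert the Poisson equation, and use $M\ge\sup_\Omega g$ together with $n_\mp\ge 0$---so the only genuine difference is where the coupling is neutralised: in the paper by summing first, in your argument by the case analysis afterwards.
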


\begin{proof}
We prove that $n_0\le M$, where $n_0=n_++n_-$ is a weak solution to
$$
  \pa_t n_0 = D_p\diver(\na n_0+n_0\na V)\quad\mbox{in }\Omega, \quad
	n_0=n_D\quad\mbox{on }\pa\Omega, \quad n_0(0)=n_0^0\quad\mbox{in }\Omega,
$$
where $D_p=2D/(1-p^2)$.
Then $0\le n_\pm\le n_0\le M$ in $\Omega$. We choose the test function
$(n_0-M)^+ = \max\{0,n_0-M\}$ in the weak formulation of the previous equation,
using $(n_0-M)^+=0$ on $\pa\Omega$ and $(n_0(0)-M)^+=0$ in $\Omega$. Then
\begin{align*}
  \frac12&\int_\Omega|(n_0(t)-M)^+|^2 dx
	+ D_p\int_0^t\int_\Omega|\na (n_0-M)^+|^2 dxds \\
	&= -D_p\int_0^t\int_\Omega(n_0-M)\na V\cdot\na(n_0-M)^+ dxds
	- D_pM\int_0^t\int_\Omega\na V\cdot\na (n_0-M)^+ dxds.
\end{align*}
Writing $(n_0-M)\na V\cdot\na(n_0-M)^+ = \na V\cdot\frac12\na[(n_0-M)^+]^2$, integrating by
parts, and using the Poisson equation leads to
\begin{align*}
  \frac12\int_\Omega|(n_0(t)-M)^+|^2 dx
	&\le -\frac{D_p}{2\lambda^2}\int_0^t\int_\Omega(n_0-g(x))[(n_0-M)^+]^2 dxds \\
	&\phantom{xx}{}- \frac{D_pM}{\lambda^2}\int_0^t\int_\Omega(n_0-g(x))(n_0-M)^+ dxds
	\le 0,
\end{align*}
since $n_0-g(x)>M-g(x)\ge 0$ on the set $\{n_0>M\}$. We conclude that
$(n_0(t)-M)^+=0$ and $n_0(t)\le M$ in $\Omega$, $t>0$.
\end{proof}

\begin{lemma}[Uniform positive lower bound for $n_\pm$]\label{lem.lower}
There exists $m>0$ such that for all $t>0$,
$$
  n_\pm(t) \ge m > 0 \quad\mbox{in }\Omega.
$$
\end{lemma}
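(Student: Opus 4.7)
The plan is to prove a uniform $L^\infty(\Omega)$ bound on $w_\pm(t) := -\min\{0,\log n_\pm(t)+\alpha\}$ for all $t>0$, where $\alpha := \log(2/m_\ast)$. This choice of $\alpha$ ensures that $w_\pm$ vanishes on the parabolic boundary: on $\pa\Omega$ we have $n_\pm = n_D/2 \ge m_\ast/2 = e^{-\alpha}$, and at $t=0$ we have $n_\pm(0) = \tfrac12 n_0^0 \pm \vec n^0\cdot\m \ge m_\ast/2 = e^{-\alpha}$. Since $n_\pm = e^{-\alpha-w_\pm}$ on the support $\{n_\pm < e^{-\alpha}\}$, once $\|w_\pm(t)\|_{L^\infty(\Omega)} \le K$ uniformly in $t$ is established, we obtain $n_\pm(t) \ge e^{-\alpha-K} =: m > 0$.

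Following \cite[Lemma 3.6]{GaGr89}, for each integer $q \ge 2$ I would test \eqref{1.nplus} with $\psi_+ := e^t w_+^{q-1}/n_+$ (and analogously \eqref{1.nminus} with $\psi_-$); by the boundary and initial behaviour above this is admissible. On the support of $w_+$ the identity $\na w_+ = -\na n_+/n_+$ turns the parabolic term into $\int \pa_t n_+\,\psi_+\,dx = -\tfrac{e^t}{q}\tfrac{d}{dt}\|w_+\|_{L^q}^q$, and the diffusion term into a good dissipation $D_+ e^t \int |\na w_+|^2\bigl((q-1)w_+^{q-2}+w_+^{q-1}\bigr)dx$ together with a drift remainder of the form $D_+ e^t \int \na V\cdot\na w_+\bigl((q-1)w_+^{q-2}+w_+^{q-1}\bigr)dx$. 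The latter is absorbed into the dissipation by Young's inequality provided we have a time-uniform bound on $\|\na V(t)\|_{L^\infty(\Omega)}$; this is supplied by $W^{2,q_0}$ elliptic regularity for \eqref{1.eq3}, using the uniform estimate $n_0 = n_+ + n_- \le 2M$ of Lemma \ref{lem.upper} and $g \in L^\infty(\Omega)$.

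The delicate point is the relaxation term. Using the nonnegativity $n_-\ge 0$ (which follows by a standard truncation argument as in Lemma \ref{lem.upper}), on $\{n_+ \le e^{-\alpha}\}$ we split
\[
  -\int \tfrac{1}{2\tau}(n_- - n_+)\psi_+\, dx = \int \tfrac{e^t w_+^{q-1}}{2\tau}\, dx - \int \tfrac{e^t w_+^{q-1} n_-}{2\tau n_+}\, dx \le \tfrac{e^t}{2\tau}\|w_+\|_{L^{q-1}}^{q-1},
\]
the second integral being $\ge 0$ and therefore discardable. This is the essential observation: the bound involves no lower bound on $n_-$, which is exactly the quantity we are trying to prove. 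Assembling all terms, integrating in time, using $w_+(0)=0$, and applying Poincar\'e to $w_+^{q/2}\in H^1_0(\Omega)$ together with the Sobolev embedding $H^1(\Omega)\hookrightarrow L^6(\Omega)$ (valid since $d=3$), I expect to obtain a recursive inequality of precisely the form treated by the De Giorgi--Moser lemma in the appendix, which then yields $\|w_\pm\|_{L^\infty(\Omega\times(0,\infty))} \le K$. The analogous argument for $w_-$ finishes the proof. Beyond the iteration bookkeeping, the main obstacle is the algebraic structure of the relaxation term: the Van Roosbroeck analysis of \cite{GaGr89} has no such coupling, and it is essential here that $(n_- - n_+)\psi_+$ splits into a tractable lower-order piece and a sign-favourable piece that can be discarded.
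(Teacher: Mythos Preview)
Your overall strategy matches the paper's: test \eqref{1.nplus}--\eqref{1.nminus} with $e^t w_\pm^{q-1}/n_\pm$ and feed the resulting recursive inequality into the iteration Lemma~\ref{lem.iter}. There is, however, one genuine gap. Admissibility of $\psi_\pm=e^t w_\pm^{q-1}/n_\pm$ does \emph{not} follow from the vanishing of $w_\pm$ on the parabolic boundary: if $n_\pm$ vanishes somewhere in the interior then $w_\pm=+\infty$ there and $\psi_\pm$ is not even locally integrable, let alone in $L^2(0,T;H_0^1)$. The paper closes this gap with a preliminary step: testing with $(n_\pm-m^*(t))^-$, where $m^*(t)=m_0e^{-\mu t}$ for suitable $m_0,\mu>0$, and summing the two equations yields the time-dependent bound $n_\pm(t)\ge m^*(t)>0$. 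On each finite interval this makes $1/n_\pm$ bounded and $w_\pm$ bounded, so $\psi_\pm$ is a legitimate test function; the Moser iteration then upgrades this to a time-uniform bound. A second, smaller omission: Lemma~\ref{lem.iter} outputs a constant multiple of $\sum_i\|w_i\|_{L^\infty(0,\infty;L^1(\Omega))}+1$, so you need a time-uniform seed bound to close. The paper obtains it by taking $q=2$ in the iteration inequality and using the extra dissipation $\int|\nabla w_\pm^{3/2}|^2$ with Poincar\'e to produce a term $-C_1w_\pm^3$ that dominates the quadratic growth $+C_2w_\pm^2$.

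Where your route genuinely differs from the paper, it is correct. For the drift you absorb $\na V\cdot\na w_\pm$ via Young using a uniform bound $\|\na V\|_{L^\infty}\le C$ (valid by Lemma~\ref{lem.upper} and $W^{2,q_0}$ elliptic regularity for \eqref{1.eq3}); the paper instead integrates by parts and uses the Poisson equation directly. Your version yields $\alpha=2$ rather than $\alpha=1$ in \eqref{a.1}, which is harmless. For the relaxation you treat $n_+$ and $n_-$ separately, discarding the sign-favourable piece $\int e^t w_+^{q-1}n_-/(2\tau n_+)$; the paper instead sums the two equations and uses that $z\mapsto\big({-}(\log z+m)^-\big)^{q-1}/z$ is nonincreasing to make the full coupling term nonpositive. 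Both arguments work; yours is slightly more elementary, the paper's gives a cleaner constant.
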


\begin{proof}
We show first that $n_\pm$ is strictly positive with a lower bound that depends on
time. For this, use the test functions $(n_\pm-m^*(t))^-=\min\{0,n_\pm-m^*(t)\}$,
where $m^*(t)=m_0e^{-\mu t}$, $\mu=2\lambda^{-2}D_-M$, and
$$
  m_0 = \min\bigg\{\inf_{\pa\Omega}\frac{n_D}{2},\inf_\Omega\bigg(
	\frac12n_0^0+\vec n^0\cdot\m \bigg)\bigg\} > 0,
$$
in \eqref{1.nplus}, \eqref{1.nminus}, respectively,
and add both equations. Proceeding similarly as in the proof of Lemma \ref{lem.upper},
we obtain
\begin{align*}
  \frac12 & \int_\Omega\big((n_+-m^*)^-(t)^2 + (n_--m^*)^-(t)^2\big)dx \\
	&\phantom{xx}{}+ D_+\int_0^t\int_\Omega|\na(n_+-m^*)^-|^2 dxds
	+ D_-\int_0^t\int_\Omega|\na(n_--m^*)^-|^2 dxds \\
	&= -\frac{1}{2\tau}\int_0^t\int_\Omega(n_+ - n_-)\big((n_+-m^*)^- - (n_--m^*)^-\big)
	 dxds \\
	&\phantom{xx}{} - \frac{D_+}{2}\int_0^t\int_\Omega\na [(n_+ - m^*)^-]^2
	\cdot\na V dxds
	- \frac{D_-}{2}\int_0^t\int_\Omega\na [(n_- - m^*)^-]^2\cdot\na V dxds \\
	&\phantom{xx}{}- D_+\int_0^t m^*(s)\int_\Omega \na(n_+ - m^*)^-\cdot\na V dxds \\
	&\phantom{xx}{}- D_-\int_0^t m^*(s)\int_\Omega \na(n_- - m^*)^-\cdot\na V dxds \\
	&\phantom{xx}{}+ \mu\int_0^t m^*(s)\int_\Omega\big((n_+-m^*)^- + (n_--m^*)^-
	\big)dxds.
\end{align*}
The first term on the right-hand side is nonpositive since $z\mapsto(z-m^*)^-$ is
monotone. For the remaining terms, we use the Poisson equation and the estimate
$n_0=n_++n_-\le 2M$:
\begin{align*}
  \frac12 & \int_\Omega\big((n_+-m^*)^-(t)^2 + (n_--m^*)^-(t)^2\big)dx \\
	&\le -\frac{D_+}{2\lambda^2}\int_0^t\int_\Omega|(n_+ - m^*)^-|^2(n_0-g(x))dxds \\
	&\phantom{xx}{} - \frac{D_-}{2\lambda^2}
	\int_0^t\int_\Omega|(n_- - m^*)^-|^2(n_0-g(x))dxds \\
	&\phantom{xx}{}- \frac{D_+}{\lambda^2}\int_0^t m^*(s)\int_\Omega (n_+ - m^*)^-
  (n_0 - g(x)) dxds \\
	&\phantom{xx}{}- \frac{D_-}{\lambda^2}\int_0^t m^*(s)\int_\Omega (n_- - m^*)^-
  (n_0 - g(x)) dxds \\
	&\phantom{xx}{}+ \mu\int_0^t m^*(s)\int_\Omega
	\big((n_+-m^*)^- + (n_--m^*)^-\big)dxds \\
  &\le \frac{D_-}{2\lambda^2}\|g\|_{L^\infty(\Omega)}\int_0^t\int_\Omega
	\big(|(n_+ - m^*)^-|^2 + |(n_- - m^*)^-|^2\big)dxds \\
	&\phantom{xx}{}
	- \frac{D_+}{\lambda^2}\int_0^t m^*(s)\int_\Omega(n_+ - m^*)^-
	\bigg(2M-\frac {\lambda^2}{D_+}\mu\bigg)dxds \\
	&\phantom{xx}{}
	- \frac{D_-}{\lambda^2}\int_0^t m^*(s)\int_\Omega(n_- - m^*)^-
	\bigg(2M-\frac {\lambda^2}{D_-}\mu\bigg)dxds \\
	&\le \frac{D_-}{2\lambda^2}\|g\|_{L^\infty(\Omega)}\int_0^t\int_\Omega
	\big(|(n_+ - m^*)^-|^2 + |(n_- - m^*)^-|^2\big)dxds.
\end{align*}
In the last inequality, we used $2M-\lambda^2D_\pm^{-1}\mu\le 0$. By Gronwall's
lemma, this shows that $n_\pm\ge m^*(t)>0$ in $\Omega$.

In the second step, we prove that $n_\pm$ is strictly positive uniformly in time.
The idea is to use the Di Giorgi--Moser iteration method similarly as
in the proof of Lemma 3.6 in \cite{GaGr89}.
We set $w_\pm=-(\log n_\pm+m)^-\in L^2(0,T;H^1(\Omega))$  and take the
test function $e^{t}w_\pm^{q-1}/n_\pm$ in \eqref{1.nplus},
\eqref{1.nminus}, respectively, where $0<-\log(m_*/2)<m<1$ and $q\in\N,q\ge2$.
Because of the previous step which ensures that $n_\pm>0$, this test function 
is well defined. Moreover, $\log(n_D/2)+m\ge \log(m_*/2)+m\ge 0$
and $\log(n_0^0/2\pm\vec n^0\cdot\m )+m\ge \log(m_*/2)+m\ge 0$ such that
$w_\pm=0$ on $\pa\Omega$ and $w_\pm(0)=0$ in $\Omega$.
Formally, we compute $\pa_t(e^{t}w_\pm^q) - e^{t}w_\pm^q
= -qe^{t}w_\pm^{q-1}n_\pm^{-1}\pa_t n_\pm$.
Therefore, integrating this identity formally over $\Omega$ and $(0,t)$ and
using \eqref{1.nplus}--\eqref{1.nminus},
\begin{align*}
  \int_\Omega& e^{t}(w_+^q(t) + w_-^q(t))dx
	- \int_0^t\int_\Omega e^{s}(w_+^q + w_-^q)dxds \\
	&= -q\int_0^t e^{s}\bigg(\bigg\langle\pa_t n_+,\frac{w_+^{q-1}}{n_+}\bigg\rangle
	+ \bigg\langle\pa_t n_-,\frac{w_-^{q-1}}{n_-}\bigg\rangle\bigg)ds \\
	&= \frac{q}{2\tau}\int_0^t e^{s}\int_\Omega(n_+ - n_-)
	\bigg(\frac{w_+^{q-1}}{n_+} - \frac{w_-^{q-1}}{n_-}\bigg)dxds \\
	&\phantom{xx}{}-D_+ q\int_0^t e^{s}\int_\Omega(\na n_+ + n_+\na V)
	\cdot((q-1)w_+^{q-2} + w_+^{q-1})\frac{\na n_+}{n_+^2}dxds \\
	&\phantom{xx}{}- D_- q\int_0^t e^{s}\int_\Omega(\na n_- + n_-\na V)
	\cdot((q-1)w_-^{q-2} + w_-^{q-1})\frac{\na n_-}{n_-^2}dxds,
\end{align*}
where $\langle\cdot,\cdot\rangle$ is the duality product of $H^{-1}(\Omega)$
and $H_0^1(\Omega)$. The computation can be made rigorous by a density argument;
see \cite[(5.18)]{Jue95} for a similar statement.
Since $z\mapsto (-(\log z+m)^-)^{q-1}/z$ is nonincreasing for $z>0$, the first term
on the right-hand side is nonpositive, giving
\begin{align*}
  \int_\Omega& e^{t}(w_+^q(t) + w_-^q(t))dx
	- \int_0^t\int_\Omega e^{s}(w_+^q + w_-^q)dxds \\
	&\le -D_+q\int_0^t e^{s}\int_\Omega ((q-1)w_+^{q-2}+w_+^{q-1})
	\big(|\na w_+|^2 - \na V\cdot\na w_+\big) dxds \\
	&\phantom{xx}{}
	- D_-q\int_0^t e^{s}\int_\Omega ((q-1)w_-^{q-2}+w_-^{q-1})
	\big(|\na w_-|^2 - \na V\cdot\na w_-\big) dxds.
\end{align*}
Taking into account the Poisson equation and the inequalities $D_+\le D_-$,
$w_\pm\ge 0$, and $n_0\le 2M$, this becomes
\begin{align}
  \int_\Omega& e^{t}(w_+^q(t) + w_-^q(t))dx
	- \int_0^t\int_\Omega e^{s}(w_+^q + w_-^q)dxds \nonumber \\
	&\phantom{xx}{}+\frac{4D_+(q-1)}{q}\int_0^t e^{s}\int_\Omega
	\big(|\na w_+^{q/2}|^2 + |\na w_-^{q/2}|^2\big)dxds \nonumber \\
	&\phantom{xx}{}+ \frac{4D_+q}{(q+1)^2}\int_0^t e^{s}
	\int_\Omega\big(|\na w_+^{(q+1)/2}|^2 + |\na w_-^{(q+1)/2}|^2\big)dxds \nonumber \\
	&\le \frac{D_+q}{\lambda^2}\int_0^t e^{s}\int_\Omega
	\bigg(w_+^{q-1} + \frac{1}{q}w_+^q\bigg)(n_0-g(x))dxds \nonumber \\
	&\phantom{xx}{}+ \frac{D_-q}{\lambda^2}\int_0^t e^{s}\int_\Omega
	\bigg(w_-^{q-1} + \frac{1}{q}w_-^q\bigg)(n_0-g(x))dxds \nonumber \\
	&\le \frac{2D_+M}{\lambda^2}\int_0^t e^{s}\int_\Omega
	(qw_+^{q-1} + w_+^q)dxds + \frac{2D_-M}{\lambda^2}\int_0^t e^{s}\int_\Omega
	(qw_-^{q-1} + w_-^q)dxds \nonumber \\
	&\le \frac{2D_-Mq}{\lambda^2}\int_0^t e^{s}\int_\Omega(w_+^q+w_-^q)dxds
	+ \frac{4D_-M}{\lambda^2}\int_0^t e^{s}\int_\Omega dxds. \label{3.wq}
\end{align}
In the last inequality, we used Young's inequality:
$qw_\pm^{q-1}\le (q-1)w_\pm^q + 1$. We infer that
\begin{align*}
  \int_\Omega & e^t(w_+^q(t)+w_-^q(t))dx
	+ K_0\int_0^t\int_\Omega e^{s}\big(|\na w_+^{q/2}|^2 + |\na w_-^{q/2}|^2\big)dxds \\
	&\le K_1q\int_0^t\int_\Omega e^{s}(w_+^q + w_-^q)dxds
	+ K_2 e^{t}
\end{align*}
for some constants $K_0$, $K_1$, $K_2>0$ which are independent of $q$ and time.

Lemma \ref{lem.iter} in the appendix shows that $w_\pm$ is bounded in $L^\infty$
with a constant which depends on the $L^\infty(0,T;L^1(\Omega))$ norm of $w_\pm$.
Therefore, it remains to estimate $w_\pm$ in this norm. To this end, we
take $q=2$ in \eqref{3.wq}:
\begin{align*}
  \int_\Omega & e^t(w_+^2(t) + w_-^2(t))dx
	- \int_0^t e^s\int_\Omega(w_+^2 + w_-^2)dxds \\
	&\phantom{xx}{}+ \frac89D_+\int_0^t e^s\int_\Omega
	\big(|\na w_+^{3/2}|^2 + |\na w_-^{3/2}|^2\big)dxds \\
	&\le \frac{4D_-M}{\lambda^2}\int_0^t e^s\int_\Omega(w_+^2 + w_-^2)dxds
  + \frac{4D_-M}{\lambda^2}\int_0^t e^s\int_\Omega dxds.
\end{align*}
By the Poincar\'e inequality, for some constants $C_i>0$, we obtain
$$
  \int_\Omega e^t(w_+^2(t) + w_-^2(t))dx
  \le \int_0^t e^s \int_\Omega\big(-C_1(w_+^3+w_-^3) + C_2(w_+^2+w_-^2)\big)dxds
	+ C_3e^t.
$$
Since $f(x)=-C_1x^3 + C_2x^2 = (-C_1x+C_2)x^2$ has a maximum 
$\widetilde{C}_4>0$ for $x\ge 0$, we can estimate the right-hand side by
$e^t(\widetilde{C}_4\operatorname{meas}(\Omega)+C_3)$. Division by $e^t$ leads to
$$
  \int_\Omega (w_+^2(t) + w_-^2(t))dx \le C_4 
	:=\widetilde{C}_4\operatorname{meas}(\Omega)+C_3,
$$
which does not depend on time. In particular, this shows
that $w_\pm$ is bounded in $L^\infty(0,T;$ $L^1(\Omega))$ uniformly in time.
Thus, by Lemma \ref{lem.iter}, $\|w_\pm(t)\|_{L^\infty(\Omega)}\le K$ for some
constant $K>0$ and $n_\pm(t)\ge \exp(-K-m)$ in $\Omega$, $t>0$. This finishes
the proof.
\end{proof}

\begin{remark}\rm
The factor $e^t$ is necessary to derive time-uniform bounds. Indeed, without this
factor, the last term on the right-hand side of \eqref{3.wq} becomes
$4D_-M\lambda^{-2}\int_0^t\int_\Omega dsdx$ which is unbounded as $t\to\infty$.
\qed
\end{remark}

We introduce the relative free energy
$$
  H(t) = \int_\Omega\bigg(h(n_+|n_\infty) + h(n_-|n_\infty)
	+ \frac{\lambda^2}{2}|\na(V-V_\infty)|^2\bigg)(t)dx,
$$
where $h(n_\pm|n_\infty) = n_\pm\log(2n_\pm/n_\infty) - n_\pm + \tfrac12 n_\infty$,
and the electrochemical potentials $\phi_\pm = \log n_\pm + V$.

\begin{lemma}[Relative free energy estimate]\label{lem.ent}
It holds that
\begin{align*}
  \frac{dH}{dt} &\le -\frac{D_+}{2}\int_\Omega
	\big(n_+|\na(\phi_+ - \phi_\infty)|^2
	+ n_-|\na(\phi_- - \phi_\infty)|^2\big)dx \\
	&\phantom{xx}{}
	+ \frac{D_-}{2}\int_\Omega\big((n_+ - \tfrac12 n_\infty)^2
	+ (n_- - \tfrac12 n_\infty)^2\big)|\na\phi_\infty|^2 dx \\
	&\phantom{xx}{}
	- \frac{1}{8\tau}\int_\Omega\big(\sqrt{n_+}-\sqrt{n_-}\big)^2 dx.
\end{align*}
\end{lemma}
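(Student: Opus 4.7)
The plan is to compute $dH/dt$ by identifying a combined test function that folds the electric-energy contribution into the entropy piece, to insert the PDEs \eqref{1.nplus}--\eqref{1.nminus}, and then to estimate the resulting terms. First I differentiate $H(t)$ termwise. Since $h'(n|n_\infty)=\log(2n/n_\infty)$, each entropy contribution equals $\int\log(2n_\pm/n_\infty)\,\pa_tn_\pm\,dx$. For the electric energy I integrate by parts---using that $V-V_\infty$ and $\pa_tV$ both vanish on $\pa\Omega$---and invoke $-\lambda^2\Delta(V-V_\infty)=n_0-n_\infty=(n_++n_-)-n_\infty$ to obtain
$$
\frac{d}{dt}\frac{\lambda^2}{2}\int|\na(V-V_\infty)|^2\,dx = \int(V-V_\infty)(\pa_tn_++\pa_tn_-)\,dx.
$$
Summing the three pieces gives $\frac{dH}{dt}=\sum_\pm\int\chi_\pm\,\pa_tn_\pm\,dx$ with $\chi_\pm:=\log(2n_\pm/n_\infty)+V-V_\infty$. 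The key structural observation to exploit is that on $\pa\Omega$ we have $n_\pm=n_D/2$, $n_\infty=n_D$, $V=V_\infty=V_D$, so $\chi_\pm\equiv 0$ on $\pa\Omega$ (making $\chi_\pm$ admissible as a test function), and moreover $\na\chi_\pm=\na(\phi_\pm-\phi_\infty)$.

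Inserting \eqref{1.nplus}--\eqref{1.nminus} and integrating by parts yields
$$
\frac{dH}{dt}=-\sum_{\pm} D_\pm\int n_\pm\na\phi_\pm\cdot\na(\phi_\pm-\phi_\infty)\,dx+\frac{1}{2\tau}\int(\chi_+-\chi_-)(n_--n_+)\,dx,
$$
and $\chi_+-\chi_-=\log(n_+/n_-)$ turns the relaxation contribution into $-\frac{1}{2\tau}\int(n_+-n_-)\log(n_+/n_-)\,dx$. I then apply the elementary log-mean inequality $(a-b)(\log a-\log b)\ge c(\sqrt a-\sqrt b)^2$ with a convenient constant to produce the claimed term $-\tfrac{1}{8\tau}\int(\sqrt{n_+}-\sqrt{n_-})^2\,dx$.

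For each drift-diffusion term I decompose $\na\phi_\pm=\na(\phi_\pm-\phi_\infty)+\na\phi_\infty$ and $n_\pm=(n_\pm-\tfrac12 n_\infty)+\tfrac12 n_\infty$. This produces the full dissipation $-D_\pm\int n_\pm|\na(\phi_\pm-\phi_\infty)|^2\,dx$ together with two cross terms. The one carrying $\tfrac12 n_\infty\na\phi_\infty$ vanishes after one integration by parts: the divergence lands on the stationary flux $n_\infty\na\phi_\infty$, which is divergence-free by \eqref{1.stat1}, while the boundary contribution is zero because $\phi_\pm-\phi_\infty=-\log 2$ is constant on $\pa\Omega$ and the total equilibrium flux integrates to zero. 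The remaining cross term $-D_\pm\int(n_\pm-\tfrac12 n_\infty)\na\phi_\infty\cdot\na(\phi_\pm-\phi_\infty)\,dx$ is the only genuine obstruction; a weighted Young inequality absorbs half of the dissipation on the left and leaves a forcing contribution whose pointwise density is essentially $(n_\pm-\tfrac12 n_\infty)^2|\na\phi_\infty|^2/n_\pm$. The uniform positive lower bound $n_\pm\ge m$ from Lemma \ref{lem.lower}, combined with the ordering $D_+\le D_-$, converts this into the stated upper bound with prefactor $D_-/2$.

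The main obstacle is organizational rather than technical: identifying the combined test function $\chi_\pm$ so the Poisson contribution folds cleanly into the entropy dissipation structure; recognizing that the divergence-free stationary current allows one cross term to be eliminated exactly; and balancing the Young step so that enough dissipation survives on the left to drive the subsequent exponential-decay argument. Once this structure is in place, the remaining manipulations are routine.
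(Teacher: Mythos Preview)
Your proposal is correct and follows essentially the same approach as the paper: your combined test function $\chi_\pm=\log(2n_\pm/n_\infty)+V-V_\infty$ is exactly the paper's $\phi_\pm-\phi_\infty+\log 2$, and the log-mean inequality, weighted Young step, and use of the lower bound $n_\pm\ge m$ all match. The only organizational difference is that the paper subtracts $\tfrac12 D_\pm$ times the stationary equation from \eqref{1.nplus}--\eqref{1.nminus} \emph{before} pairing with the test function, which directly yields the flux in the form $n_\pm\na(\phi_\pm-\phi_\infty)+(n_\pm-\tfrac12 n_\infty)\na\phi_\infty$ and thereby absorbs your separate integration-by-parts argument for the $\tfrac12 n_\infty\na\phi_\infty$ cross term into a single preliminary step.
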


\begin{proof}
Using the Poisson equation and the definitions $\phi_\pm=\log n_\pm+V$ and
$\phi_\infty=\log n_\infty+V_\infty$, it follows that
\begin{align}
  \frac{dH}{dt} &= \bigg\langle\pa_t n_+,\log\frac{2n_+}{n_\infty}\bigg\rangle
	+ \bigg\langle\pa_t n_-,\log\frac{2
		n_-}{n_\infty}\bigg\rangle
	- \lambda^2\langle\pa_t\Delta(V-V_\infty),V-V_\infty\rangle \nonumber \\
	&= \bigg\langle\pa_t n_+,\log\frac{n_+}{n_\infty}+\log 2\bigg\rangle
	+ \bigg\langle\pa_t n_-,\log\frac{n_-}{n_\infty}+\log 2\bigg\rangle
	+ \langle\pa_t(n_+ + n_-),V-V_\infty\rangle \nonumber \\
	&= \langle\pa_t n_+,\phi_+-\phi_\infty+\log 2\rangle
	+ \langle\pa_t n_-,\phi_--\phi_\infty+\log 2\rangle \label{dHdt}
\end{align}
This can be made rigorous similarly as in \cite[formula (5.18)]{Jue95},
together with the techniques in \cite[Theorem 3, p.~287]{Eva02}.

Next, we subtract $\frac 12 D_{\pm}\times$\eqref{2.steady} 
from \eqref{1.nplus} and \eqref{1.nminus}, respectively:
\begin{align*}
  \pa_t n_+ - D_+\diver\big(n_+\na(\phi_+-\phi_\infty)
	+ (n_+ - \tfrac12 n_\infty)\na\phi_\infty\big) = -\frac{1}{2\tau}(n_+ - n_-), \\
  \pa_t n_- - D_-\diver\big(n_-\na(\phi_--\phi_\infty)
	+ (n_- - \tfrac12 n_\infty)\na\phi_\infty\big) = -\frac{1}{2\tau}(n_- - n_+).
\end{align*}
Inserting these equations into \eqref{dHdt}, we find that
\begin{align*}
  \frac{dH}{dt} &= -D_+\int_\Omega n_+|\na(\phi_+ - \phi_\infty)|^2 dx
	- D_-\int_\Omega n_-|\na(\phi_- - \phi_\infty)|^2 dx \\
	&\phantom{xx}{}- D_+\int_\Omega(n_+ - \tfrac12 n_\infty)\na\phi_\infty\cdot
	\na(\phi_+ - \phi_\infty)dx \\
	&\phantom{xx}{}- D_-\int_\Omega(n_- - \tfrac12 n_\infty)\na\phi_\infty\cdot
	\na(\phi_- - \phi_\infty)dx \\
  &\phantom{xx}{}- \frac{1}{2\tau}\int_\Omega(n_+ - n_-)(\log n_+ - \log n_-)dx.
\end{align*}
We use the elementary inequality
\begin{equation}\label{3.elem}
  (y-z)(\log y-\log z) \ge \frac14\big(\sqrt{y}-\sqrt{z}\big)^2\quad\mbox{for }
	y,z>0
\end{equation}
to estimate the last term. Then the Young inequality and the lower bound
$n_\pm\ge m$ lead to
\begin{align*}
  \frac{dH}{dt} &\le -\frac{D_+}{2}\int_\Omega
	n_+|\na(\phi_+ - \phi_\infty)|^2 dx
	- \frac{D_-}{2}\int_\Omega n_-|\na(\phi_- - \phi_\infty)|^2 dx \\
	&\phantom{xx}{}
	+ \frac{D_+}{2m}\int_\Omega(n_+ - \tfrac12 n_\infty)^2|\na\phi_\infty|^2 dx
	+ \frac{D_-}{2m}\int_\Omega(n_- - \tfrac12 n_\infty)^2|\na\phi_\infty|^2 dx \\
	&\phantom{xx}{}
	-\frac{1}{8\tau}\int_\Omega\big(\sqrt{n_+}-\sqrt{n_-}\big)^2 dx,
\end{align*}
finishing the proof.
\end{proof}

\begin{lemma}[Lower bound for the chemical potentials]\label{lem.lowerphi}
It holds that
\begin{align*}
  \|\na&(\phi_+-\phi_\infty)\|_{L^2(\Omega)}^2
	+ \|\na(\phi_--\phi_\infty)\|_{L^2(\Omega)}^2 \\
	&\ge C\big(\|n_+-\tfrac12 n_\infty\|_{L^2(\Omega)}^2
	+ \|n_--\tfrac12 n_\infty\|_{L^2(\Omega)}^2
	+ \|\na(V-V_\infty)\|_{L^2(\Omega)}^2\big),
\end{align*}
where $C>0$ depends only on $M$, $M_\infty$, $\lambda$, and $\Omega$.
\end{lemma}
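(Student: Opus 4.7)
The plan is to shift the potential difference so that it lies in $H_0^1(\Omega)$ and then exploit the algebraic structure of $\phi_\pm - \phi_\infty$. Define
\begin{equation*}
\psi_\pm := \phi_\pm - \phi_\infty + \log 2 = \log(2n_\pm/n_\infty) + (V - V_\infty).
\end{equation*}
Since on $\pa\Omega$ we have $n_\pm = n_D/2$, $n_\infty = n_D$, and $V = V_\infty = V_D$, both $\psi_+$ and $\psi_-$ vanish on $\pa\Omega$. The constant shift does not affect gradients, hence $\|\na(\phi_\pm - \phi_\infty)\|_{L^2(\Omega)}^2 = \|\na\psi_\pm\|_{L^2(\Omega)}^2$, and the Poincar\'e inequality $\|\psi_\pm\|_{L^2(\Omega)} \le C_P\|\na\psi_\pm\|_{L^2(\Omega)}$ applies to each of them. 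Choosing this shift is the key observation; it is dictated by the factor-of-two discrepancy between the Dirichlet data for $n_\pm$ and for $n_\infty$.

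Next I would couple $\psi_\pm$ to the quantities we wish to control. Subtracting the Poisson equations for $V$ and $V_\infty$ and testing by $V - V_\infty$ yields
\begin{equation*}
\lambda^2\|\na(V - V_\infty)\|_{L^2(\Omega)}^2 = \int_\Omega\big[(n_+ - \tfrac12 n_\infty) + (n_- - \tfrac12 n_\infty)\big](V - V_\infty)\,dx.
\end{equation*}
I then evaluate $\sum_\pm\int_\Omega(n_\pm - \tfrac12 n_\infty)\psi_\pm\,dx$ by splitting $\psi_\pm = \log(2n_\pm/n_\infty) + (V - V_\infty)$: the $(V - V_\infty)$ piece is exactly $\lambda^2\|\na(V - V_\infty)\|_{L^2(\Omega)}^2$. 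For the logarithmic piece, the elementary inequality $(y - 1)\log y \ge (y - 1)^2/\max(1, y)$ for $y > 0$, applied with $y = 2n_\pm/n_\infty$ and combined with $n_\pm \le M$ (Lemma~\ref{lem.upper}) and $n_\infty \le M_\infty$, yields
\begin{equation*}
\sum_\pm\int_\Omega(n_\pm - \tfrac12 n_\infty)\log(2n_\pm/n_\infty)\,dx \ge C_1\bigl(\|n_+ - \tfrac12 n_\infty\|_{L^2(\Omega)}^2 + \|n_- - \tfrac12 n_\infty\|_{L^2(\Omega)}^2\bigr),
\end{equation*}
with $C_1 = \min(2/M_\infty,\,1/M)$.

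Write $A := \|n_+ - \tfrac12 n_\infty\|_{L^2(\Omega)}^2 + \|n_- - \tfrac12 n_\infty\|_{L^2(\Omega)}^2$, $B := \|\na(V - V_\infty)\|_{L^2(\Omega)}^2$, and $L := \|\na\psi_+\|_{L^2(\Omega)}^2 + \|\na\psi_-\|_{L^2(\Omega)}^2$. The two steps above combine into
\begin{equation*}
C_1 A + \lambda^2 B \le \sum_\pm\int_\Omega(n_\pm - \tfrac12 n_\infty)\psi_\pm\,dx \le C_P\sqrt{A\,L},
\end{equation*}
where the last bound uses Cauchy--Schwarz in $L^2(\Omega)$ and over the index $\pm$, followed by Poincar\'e on each $\psi_\pm$. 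A Young inequality $C_P\sqrt{AL} \le (C_1/2)A + (C_P^2/(2C_1))L$ absorbs the $A$-term on the left, leaving $A + B \le CL$ with $C = C(M, M_\infty, \lambda, \Omega)$, which is exactly the claim. I expect no real obstacle beyond identifying the correct shift $\log 2$; notably, no lower bound on $n_\pm$ enters the argument, since $(y - 1)\log y \ge (y - 1)^2/\max(1, y)$ needs only an upper bound on $y$, consistent with the stated constant dependence.
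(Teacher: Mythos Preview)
Your proof is correct and follows essentially the same route as the paper: introduce the shifted potentials $\psi_\pm=\phi_\pm-\phi_\infty+\log 2\in H_0^1(\Omega)$, bound $\sum_\pm\int_\Omega(n_\pm-\tfrac12 n_\infty)\psi_\pm\,dx$ from below by $C_1A+\lambda^2B$ via a logarithmic inequality plus the Poisson identity, and from above via Poincar\'e/Young to absorb $A$. The only cosmetic difference is that the paper uses $(y-z)(\log y-\log z)\ge\tfrac14(\sqrt{y}-\sqrt{z})^2$ and then $(\sqrt{y}-\sqrt{z})^2=(y-z)^2/(\sqrt{y}+\sqrt{z})^2$ in place of your inequality $(y-1)\log y\ge(y-1)^2/\max(1,y)$; both yield a constant depending only on the upper bounds $M$, $M_\infty$, and your remark that no lower bound on $n_\pm$ is needed is spot on.
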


Recall that $M$ is the upper bound for $n_\pm$ (see Lemma \ref{lem.upper})
and $M_\infty$ is the upper bound for $n_\infty$ (see \eqref{2.minfty}).

\begin{proof}
It holds that $\phi_\pm-\phi_\infty+\log 2 =  0$ on $\pa\Omega$.
Thus, the Young and Poincar\'e inequalities yield for any $\eps>0$,
\begin{align}
  \int_\Omega(n_\pm - \tfrac12 n_\infty)(\phi_\pm - \phi_\infty + \log 2)dx
	&\le \eps\|n_\pm - \tfrac12 n_\infty\|_{L^2(\Omega)}^2
	+ C(\eps)\|\phi_\pm - \phi_\infty + \log 2\|_{L^2(\Omega)}^2 \nonumber \\
	&\le \eps\|n_\pm - \tfrac12 n_\infty\|_{L^2(\Omega)}^2
	+ C(\eps,\Omega)\|\na(\phi_\pm - \phi_\infty)\|_{L^2(\Omega)}^2. \label{aux1}
\end{align}
Inserting the definitions of $\phi_\pm$ and $\phi_\infty$, taking into account
the Poisson equation $-\lambda^2\Delta(V-V_\infty)=n_0 - n_\infty$ and
inequality \eqref{3.elem}, and finally
using the bounds $m\le n_\pm\le M$ and $m_\infty\le n_\infty\le M_\infty$, we obtain
\begin{align*}
  \int_\Omega&(n_+-\tfrac12 n_\infty)(\phi_+-\phi_\infty+\log 2)dx
	+ \int_\Omega(n_--\tfrac12 n_\infty)(\phi_--\phi_\infty+\log 2)dx \\
	&= \int_\Omega(n_+-\tfrac12 n_\infty)(\log n_+-\log(\tfrac12 n_\infty))dx
	+ \int_\Omega(n_--\tfrac12 n_\infty)(\log n_--\log(\tfrac12 n_\infty))dx \\
	&\phantom{xx}{}+ \int_\Omega(n_0 - n_\infty)(V-V_\infty)dx \\
  &\ge \frac14\int_\Omega\Big(\sqrt{n_+}-\sqrt{\tfrac12 n_\infty}\Big)^2dx
	+ \frac14\int_\Omega\Big(\sqrt{n_-}-\sqrt{\tfrac12 n_\infty}\Big)^2dx
	+ \lambda^2\int_\Omega|\na(V-V_\infty)|^2dx \\
  &= \frac14\int_\Omega\frac{(n_+-n_\infty/2)^2}
   	{\big(\sqrt{n_+}+\sqrt{n_\infty/2}\big)^2}dx
	+ \frac14\int_\Omega\frac{(n_--n_\infty/2)^2}
	{\big(\sqrt{n_-}+\sqrt{n_\infty/2}\big)^2}dx
	+ \lambda^2\int_\Omega|\na(V-V_\infty)|^2dx \\
	&\ge C_2\int_\Omega(n_+-\tfrac12 n_\infty)^2 dx
	+ C_2\int_\Omega(n_--\tfrac12 n_\infty)^2 dx
	+ \lambda^2\int_\Omega|\na(V-V_\infty)|^2dx,
\end{align*}
where $C_2=\frac14(\sqrt{M}+\sqrt{M_\infty/2})^{-2}$.
Combining this estimate with \eqref{aux1} and taking $\eps<C_1$, we conclude
the proof.
\end{proof}

\begin{lemma}[Bounds for the relative free energy]\label{lem.boundent}
There exist constants $C_\phi$, $C_H>0$ independent of the solution and time such that
\begin{align*}
  H &\le C_\phi\big(\|\na(\phi_+-\phi_\infty)\|_{L^2(\Omega)}^2
	+ \|\na(\phi_--\phi_\infty)\|_{L^2(\Omega)}^2\big), \\
  H &\ge C_H\big(\|n_+-\tfrac12 n_\infty\|_{L^2(\Omega)}^2
	+ \|n_--\tfrac12 n_\infty\|_{L^2(\Omega)}^2\big).
\end{align*}
\end{lemma}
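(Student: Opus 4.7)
My plan is to derive both inequalities from a pointwise two-sided Taylor bound for the relative entropy density $h(a|b) = a\log(a/b) - a + b$, combined with the uniform positivity and boundedness of $n_\pm$ already established, and for the upper inequality with Lemma \ref{lem.lowerphi}. The key algebraic input is the identity
\[
  h(a|b) = \frac{(a-b)^2}{2\xi} \quad \text{for some } \xi \text{ between } a \text{ and } b,
\]
which follows from $h(b|b) = 0$, $\partial_a h|_{a=b} = \log(b/b) = 0$, and $\partial_a^2 h = 1/a$ via the Lagrange form of the second-order Taylor remainder. Consequently,
\[
  \frac{(a-b)^2}{2\max(a,b)} \le h(a|b) \le \frac{(a-b)^2}{2\min(a,b)}.
\]

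For the lower bound I apply the left inequality with $a = n_\pm$, $b = \tfrac12 n_\infty$. Lemma \ref{lem.upper} and \eqref{2.minfty} give $\max(n_\pm,\tfrac12 n_\infty)\le\max(M,M_\infty/2)$ uniformly in $(x,t)$, so integrating and discarding the nonnegative electric-energy term in $H$ yields the claim with $C_H=(2\max(M,M_\infty/2))^{-1}$. For the upper bound I instead invoke the right inequality; the time-uniform positivity $n_\pm\ge m>0$ from Lemma \ref{lem.lower} together with $n_\infty\ge m_\infty>0$ from \eqref{2.minfty} makes $\min(n_\pm,\tfrac12 n_\infty)$ uniformly positive, so each entropy integral is dominated by a constant multiple of $\|n_\pm-\tfrac12 n_\infty\|_{L^2(\Omega)}^2$. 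Adding the electric term already present in $H$ gives
\[
  H \le C\big(\|n_+ - \tfrac12 n_\infty\|_{L^2(\Omega)}^2 + \|n_- - \tfrac12 n_\infty\|_{L^2(\Omega)}^2 + \|\na(V-V_\infty)\|_{L^2(\Omega)}^2\big),
\]
and Lemma \ref{lem.lowerphi} absorbs the right-hand side into $C_\phi\big(\|\na(\phi_+-\phi_\infty)\|_{L^2(\Omega)}^2+\|\na(\phi_--\phi_\infty)\|_{L^2(\Omega)}^2\big)$, as desired.

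There is no real analytic obstacle remaining; the argument is essentially bookkeeping built on previously established ingredients. The only delicate dependence is on the time-uniform strict positivity $n_\pm\ge m$ of Lemma \ref{lem.lower}, since without it the factor $1/\min(n_\pm,\tfrac12 n_\infty)$ in the upper Taylor estimate would degenerate in time and the upper bound for $H$ would collapse. That is precisely why the De Giorgi--Moser lower bound had to be established first.
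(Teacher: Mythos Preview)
Your proposal is correct and follows essentially the same route as the paper: both derive the two-sided Taylor identity $h(a|b)=(a-b)^2/(2\xi)$ with $\xi$ between $a$ and $b$, feed in the uniform bounds $m\le n_\pm\le M$ and $m_\infty\le n_\infty\le M_\infty$, and then invoke Lemma~\ref{lem.lowerphi} to pass from the $L^2$ control of $n_\pm-\tfrac12 n_\infty$ and $\na(V-V_\infty)$ to the dissipation quantities $\|\na(\phi_\pm-\phi_\infty)\|_{L^2}^2$. Your explicit constants $C_H=(2\max(M,M_\infty/2))^{-1}$ and the lower cutoff $\min(m,m_\infty/2)$ match the paper's, and your closing remark on why the De Giorgi--Moser bound is indispensable is apt.
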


\begin{proof}
Set $f(y)=y\log(y/z)-y+z$ for some fixed $z>0$. A Taylor expansion shows that
$$
  y\log\frac{y}{z} - y + z = f(y) = f(z) + f'(z)(y-z) + \frac12 f''(\xi)(y-z)^2
	= \frac{(y-z)^2}{2\xi},
$$
where $\xi$ is between $y$ and $z$. Consequently, since $n_\pm\ge m$ and
$n_\infty\ge m_\infty$,
$$
  n_\pm\log\frac{2n_\pm}{n_\infty} - n_\pm + \frac12 n_\infty
	\le \frac{1}{2C_1}(n_\pm-\tfrac12 n_\infty)^2,
$$
where $C_1 = \min\{m,m_\infty/2\}$,
and, using Lemma \ref{lem.lowerphi}, we find that
\begin{align*}
  H &\le \max\bigg\{\frac{1}{2C_1},\frac{\lambda^2}{2}\bigg\}
	\big(\|n_+-\tfrac12 n_\infty\|_{L^2(\Omega)}^2
	+ \|n_--\tfrac12 n_\infty\|_{L^2(\Omega)}^2
	+ \|\na(V-V_\infty)\|_{L^2(\Omega)}^2\big) \\
	&\le \max\bigg\{\frac{1}{2C_1},\frac{\lambda^2}{2}\bigg\}C\big(\|\na(\phi_+-\phi_\infty)\|_{L^2(\Omega)}^2
	+ \|\na(\phi_--\phi_\infty)\|_{L^2(\Omega)}^2\big).
\end{align*}
For the second estimate, we use
$$
  n_\pm\log\frac{2n_\pm}{n_\infty} - n_\pm + \frac12 n_\infty
	\ge \frac{(n_\pm-\tfrac12 n_\infty)^2}{2C_2}, \quad\mbox{where }
	2C_2 = \max\{M,M_\infty/2\},
$$
to conclude that
$$
  H \ge \frac{1}{2C_2}\int_\Omega
	\big((n_+-\tfrac12 n_\infty)^2 + (n_--\tfrac12 n_\infty)^2\big)dx.
$$
This finishes the proof.
\end{proof}


\section{Proof of Theorem \ref{thm.time}}\label{sec.proof}

The starting point is the free-energy inequality in Lemma \ref{lem.ent}.
We need to estimate the integral containing $\na\phi_\infty$. In view of
Lemmas \ref{lem.stat} and \ref{lem.boundent},
\begin{align}
  \int_\Omega & (n_\pm-\tfrac12 n_\infty)^2|\na\phi_\infty|^2 dx
	\le\|\na\phi_\infty\|_{L^\infty(\Omega)}^2\|n_\pm-\tfrac12 n_\infty\|_{L^2(\Omega)}^2
	\nonumber \\
	&\le C_\infty^2\|\phi_D\|_{W^{2,q_0}(\Omega)}^2
	\|n_\pm-\tfrac12 n_\infty\|_{L^2(\Omega)}^2
	\le C_\infty^2 C_H^{-1}\|\phi_D\|_{W^{2,q_0}(\Omega)}^2 H. \label{delta}
\end{align}
By the lower bound of $n_\pm$ and Lemma \ref{lem.lowerphi}, the free-energy inequality
in Lemma \ref{lem.ent} becomes
$$
  \frac{dH}{dt} + \bigg(\frac{D_+m}{2C_\phi} - \frac{D_-C^2_\infty}{C_H}
	\|\phi_D\|_{W^{2,q_0}(\Omega)}^2\bigg)H
	+ \frac{1}{8\tau}\|\sqrt{n_+}-\sqrt{n_-}\|_{L^2(\Omega)}^2 \le 0.
$$
Let $\delta>0$ satisfy $2\kappa:= D_+m/(2C_\phi) - D_-(C^2_\infty/C_H)\delta^2>0$
and choose $n_D$ and $V_D$ such that$\|\phi_D\|_{W^{2,q_0}(\Omega)}\le \delta$.
Then Gronwall's lemma implies that
$H(t)\le H(0)\exp(-2\kappa t)$ for $t>0$. By Lemma \ref{lem.boundent},
$$
  \|n_+(t)-\tfrac12 n_\infty\|_{L^2(\Omega)}
	+ \|n_-(t)-\tfrac12 n_\infty\|_{L^2(\Omega)}
	\le C_H^{-1/2} H(0)^{1/2}e^{-\kappa t}, \quad t>0.
$$
The $H^1(\Omega)$ elliptic estimate for the Poisson problem
$-\lambda^2\Delta (V-V_\infty)=(n_+-\frac12 n_\infty)+(n_--\frac12 n_\infty)$
in $\Omega$, $V-V_\infty=0$ on $\pa\Omega$ gives
$$
  \|V(t)-V_\infty\|_{H^1(\Omega)}
	\le C\|n_+(t)-\tfrac12 n_\infty\|_{L^2(\Omega)}
	+ C\|n_-(t)-\tfrac12 n_\infty\|_{L^2(\Omega)}
	\le Ce^{-\kappa t},
$$
which proves the first estimate. For the second result, recall that
we can decompose $N(t)$ as $N(t)=\tfrac12(n_++n_-)\sigma_0+(\vec n_\perp+\tfrac12
(n_+-n_-)\vec\mu)\cdot\vec \sigma$. We use $\vec n_\perp$ as a test function in 
\eqref{1.nperp}:
\begin{align*}
  \frac12\frac{d}{dt}&\int_\Omega|\vec n_\perp|^2 dx
	+ \frac{D}{\eta}\int_\Omega|\na\vec n_\perp|^2 dx
	+ \frac{1}{\tau}\int_\Omega|\vec n_\perp|^2 dx \\
	&= -\frac{D}{2\eta}\int_\Omega\na|\vec n_\perp|^2\cdot\na V dx
	= -\frac{D}{2\eta\lambda^2}\int_\Omega |\vec n_\perp|^2(n_0-g(x))dx \\
	&\le \frac{D}{2\eta\lambda^2}\|g\|_{L^\infty(\Omega)}\int_\Omega|\vec n_\perp|^2dx.
\end{align*}
Thus, if $\tau\le 2\eta\lambda^2/(D\|g\|_{L^\infty(\Omega)})$, the Poincar\'e
inequality shows that
$$
  \frac{d}{dt}\int_\Omega|\vec n_\perp|^2 dx
	+ 2C(D,\eta, \Omega)\int_\Omega|\vec n_\perp|^2 dx \le 0.
$$
By Gronwall's lemma,
$$
  \|\vec n_\perp(t)\|_{L^2(\Omega)}
	\le \|\vec n_\perp(0)\|_{L^2(\Omega)}e^{-C(D,\eta,\Omega)t}, \quad t>0.
$$
Therefore, we find that
\begin{align*}
  \big\|N(t)-\tfrac12 n_\infty\sigma_0\big\|_{L^2(\Omega;\C^{2\times 2})}
	&\le \big\|(\tfrac12(n_+-\tfrac12n_\infty)+\tfrac12(n_--\tfrac12n_\infty))
	\sigma_0\big\|_{L^2(\Omega;\C^{2\times 2})} \\
  &\phantom{xx}{}+ \big\|\vec n_\perp+\tfrac12(n_+-n_-)\vec\mu)\cdot\vec \sigma
	\big\|_{L^2(\Omega;\C^{2\times 2})}\\
  &\le C_H^{-1/2} H(0)^{1/2}e^{-\kappa t} 
	+  \|\vec n_\perp(0)\|_{L^2(\Omega)}e^{-C(D,\eta,\Omega)t}\le C_0^*e^{-\kappa^*t},
\end{align*}
where $C_0^*=\max({2C_H^{-1/2} H(0)^{1/2},\|\vec n_\perp(0)\|_{L^2(\Omega)}})$ 
and $\kappa^*=\min({\kappa,C(D,\eta,\Omega)t})$.
This concludes the proof of Theorem \ref{thm.time}.


\begin{remark}[Special initial spin-vector density]\label{rem.tau}\rm
Let $\m =(0,0,1)^\top$ and $\vec n^0=(0,0,n_3^0)$. Then the components
$n_1$ and $n_2$ of the spin-vector density satisfy the equation
$$
  \pa_t n_i = \diver\bigg(\frac{D}{\eta^2}(\na n_i+n_i\na V)\bigg), \quad i=1,2,
$$
with boundary conditions $n_i=0$ on $\pa\Omega$ and initial conditions
$n_i(0)=0$ in $\Omega$. The unique solution is given by $n_i(t)=0$ for all $t>0$
and $i=1,2$. Since $\vec n = \vec n_\perp + (\vec n\cdot\m )\m
= \vec n_\perp + n_3\m $, the perpendicular component vanishes, $\vec n_\perp(t)=0$.
We conclude that the dynamics of the system is completely determined by
$n_\pm$, and the proof of Theorem \ref{thm.time} gives the exponential decay
without any condition on $\tau$. In particular, the density matrix
$N(t)=\frac12(n_+ + n_-)\sigma_0 + \frac12(n_+ - n_-)\sigma_3$ converges exponentially
fast towards $\frac12 n_\infty\sigma_0$ as $t\to\infty$.
\qed
\end{remark}

\begin{remark}[Smallness condition on the relaxation time]\label{rem.smalltau}\rm\
We discuss the physical rel\-e\-vance of the smallness condition of the
scaled relaxation time $\tau\le 2\eta\lambda^2/(D\|g\|_{L^\infty(\Omega)})$.
In scaled variables, we may assume that $D=1$ and $\|g\|_{L^\infty(\Omega)}=1$.
The scaled Debye length is given by $\lambda^2 = \eps_sU_T/(qL^2g^*)
= 2.7\cdot 10^{-1}$,
where the physical parameters are explained in Table \ref{table}.
We have assumed that the semiconductor material is lowly doped.
The scaled relaxation time
is $\tau = \tau_0/t^*$, where the typical time is defined by
$t^* = L^2/(\mu_0 U_T) = 1.5\cdot 10^{-11}$\,s.
The spin-flip relaxation time is assumed to be $\tau_0=1$\,ps.
This value is realistic in GaAs quantum wells at temperature $T=50$\,K; see
\cite[Figure 1]{BaFe92}. It follows that $\tau=6\cdot 10^{-2}$.
Thus, the inequality $\tau\le 2\eta\lambda^2/(D\|g\|_{L^\infty(\Omega)})$
is satisfied if $\eta\ge 0.11$ or $p\le 0.99$. This covers almost the full
range of $p\in[0,1)$.
\qed
\end{remark}

\begin{table}
\begin{tabular}{|l|l|l|}
\hline
Parameter & physical meaning & numerical value \\ \hline
$q$ & elementary charge & $1.6\cdot 10^{-19}$\,As \\
$\eps_s$ & permittivity constant & $10^{-12}$\,As/(Vcm) \\
$\mu_0$ & (low field) mobility constant & $1.5\cdot 10^3$\, cm$^2$/(Vs) \\
$U_T$ & thermal voltage at $T=50$\,K & $4.3\cdot 10^{-3}$\,V \\
$g^*$ & maximal doping concentration & $10^{15}$/cm$^3$ \\
$\tau_0$ & spin-flip relaxation time & $10^{-12}$\,s \\
$L$ & length of the device & $10^{-5}$\,cm \\
\hline
\end{tabular}
\caption{Physical parameters.}\label{table}
\end{table}


\begin{appendix}
\section{A boundedness result}

The following lemma is an extension of a result due to Kowalczyk \cite{Kow05},
based on an iteration technique \cite{Ali79}. It slightly generalizes
\cite[Lemma A.1]{JNS15}. Although the result should be known to experts,
we present a proof for completeness.

\begin{lemma}[Boundedness from iteration]\label{lem.iter}
Let $\Omega\subset\R^d$ ($d\ge 1$) be a bounded domain and let
$w_i^{q/2}\in L^2(0,T;H^1(\Omega))\cap C^0([0,T];L^2(\Omega))$
for all $q\in\N$ with $q\ge 2$ with $w_i\ge 0$,
$w_i=0$ on $\pa\Omega$, and $w_i(0)=0$ in $\Omega$ for $i=1,\ldots,n$.
Assume that there are
constants $K_0$, $K_1$, $K_2>0$ and $\alpha$, $\beta\ge 0$
such that for all $q\ge 2$, $t>0$,
\begin{align}
  \int_\Omega & e^{t}\sum_{i=1}^n w_i(t)^q dx
	+ K_0\int_0^t\int_\Omega e^{s}\sum_{i=1}^n|\na w_i^{q/2}|^2 dxds \nonumber \\
	&\le K_1 q^\alpha\int_0^t\int_\Omega e^{s}\sum_{i=1}^n w_i^q dxds
	+ K_2 q^\beta e^{t}. \label{a.1}
\end{align}
Then
$$
  w_i(t) \le K = K_3\bigg(\sum_{i=1}^n
	\|w_i\|_{L^\infty(0,\infty;L^1(\Omega))} + 1\bigg)\quad\mbox{in }\Omega,\ t>0,
$$
where $K_3$ depends only on $\alpha$, $\beta$, $d$, $\Omega$, $K_0$, $K_1$, and $K_2$.
\end{lemma}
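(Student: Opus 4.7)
The proof is a De Giorgi--Moser iteration tailored to the time-weighted inequality \eqref{a.1}. Writing $M_q(t):=\sum_i\int_\Omega w_i(x,t)^q\,dx$ and $A_q:=\sup_{t\ge 0}M_q(t)$, my plan is to derive a recursion $A_{q\rho}\le C q^{2\alpha}(A_q+1)^{\rho}$ with $\rho:=1+2/d>1$, iterate along $q_k:=2\rho^k$, and pass $k\to\infty$ to conclude $\|w_i\|_{L^\infty(\Omega\times(0,\infty))}\le C(B+1)$, where $B:=\sum_i\|w_i\|_{L^\infty(0,\infty;L^1(\Omega))}$. The role of the time weight $e^t$ is critical throughout: dividing \eqref{a.1} by $e^t$ converts time-integrated bounds into time-uniform ones, and this is what makes the final bound uniform in $t$.

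The engine of the iteration is the Gagliardo--Nirenberg inequality
$$
\int_\Omega w_i^{q\rho}\,dx \le C_{\mathrm{GN}}\Big(\int_\Omega w_i^q\,dx\Big)^{2/d}\int_\Omega|\nabla w_i^{q/2}|^2\,dx,
$$
obtained by applying the standard $H^1_0$ embedding to $u=w_i^{q/2}$. From \eqref{a.1}, once $A_q<\infty$ is known, one has $K_0\int_0^t e^s\int_\Omega|\nabla w_i^{q/2}|^2\,dx\,ds\le e^t(K_1 q^\alpha A_q+K_2 q^\beta)$. Combining this with the Gagliardo--Nirenberg bound (and $M_q(s)\le A_q$ pointwise in $s$) gives
$$
\int_0^t e^s\int_\Omega w_i^{q\rho}\,dx\,ds\le C e^t\, q^\alpha (A_q+1)^{\rho}.
$$
Plugging this into \eqref{a.1} at level $q':=q\rho$ yields $e^t M_{q\rho}(t)\le K_1(q\rho)^\alpha\cdot C e^t q^\alpha(A_q+1)^\rho+K_2(q\rho)^\beta e^t$, and dividing by $e^t$ and taking $\sup_t$ gives the desired recursion $A_{q\rho}\le C q^{2\alpha}(A_q+1)^\rho+Cq^\beta$. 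The base case $A_2<\infty$ follows from an analogous Gagliardo--Nirenberg combined with Young's inequality applied to \eqref{a.1} at $q=2$, using the assumed $L^\infty(0,\infty;L^1(\Omega))$ control of $w_i$; one obtains $A_2\le C(B^{4/d}+1)$.

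Setting $B_k:=A_{q_k}+1$, the recursion reads $B_{k+1}\le C q_k^{2\alpha} B_k^{\rho}$, which after taking logarithms and unrolling gives $\log B_k\le\rho^k\log B_0+O(\rho^k)$. Consequently $\log B_k/q_k=O(1)$ uniformly in $k$, so $B_k^{1/q_k}$ is bounded, and passing $k\to\infty$ yields the claimed $L^\infty$ estimate, with a dependence on $B$ that reduces to linear in $B+1$ after absorbing $B^{4/d}$ for large $B$. I expect the main technical point to be the careful bookkeeping of the $e^t$ factors so that each step of the iteration produces a bound uniform in $t$; this is precisely why \eqref{a.1} contains the factor $e^t$ on both sides instead of the purely time-integrated form standard in the non-weighted Moser iteration. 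In dimensions $d\le 2$ the Sobolev exponent $\rho$ can be chosen larger, which only simplifies the iteration.
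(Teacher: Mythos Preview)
Your iteration is a correct Moser scheme and does yield the stated linear bound, but it is organized differently from the paper's proof, and two small issues deserve a cleanup. The paper applies Gagliardo--Nirenberg in the form
\[
\|w_i^{q/2}\|_{L^2(\Omega)}^2 \le C\|\nabla w_i^{q/2}\|_{L^2(\Omega)}^{2\theta}\|w_i^{q/2}\|_{L^1(\Omega)}^{2(1-\theta)},\qquad \theta=\frac{d}{d+2},
\]
together with Young's inequality, so that the term $K_1q^\alpha\int_0^t e^s\sum_i\int_\Omega w_i^q$ on the right of \eqref{a.1} is \emph{absorbed} into the gradient term on the left. This produces, after dividing by $e^t$ and taking the supremum in time, a recursion from $L^{q/2}$ to $L^q$; setting $q=2^k$ and $b_k=\sum_i\|w_i\|_{L^\infty_tL^{2^k}_x}^{2^k}+1$ one obtains $b_k\le \gamma^k b_{k-1}^2$, which solves explicitly and gives $\limsup_k b_k^{2^{-k}}\le \gamma^2 b_0$, i.e.\ the linear dependence on $B+1$ falls out immediately with no extra bookkeeping. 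By contrast, you use Gagliardo--Nirenberg in the opposite direction (from $L^q$ up to $L^{q\rho}$, $\rho=1+2/d$), feeding in the gradient bound obtained from \eqref{a.1} at level $q$ and then invoking \eqref{a.1} again at level $q\rho$; this is a legitimate alternative, but it forces the iteration onto the non-integer sequence $q_k=2\rho^k$, whereas the lemma's regularity hypothesis is stated only for $q\in\N$ (so you would need to remark that \eqref{a.1} in fact holds for real $q\ge 2$, which is true in the application but not strictly part of the hypotheses). Also, your base case should read $A_2\le C(B^2+1)$ rather than $C(B^{4/d}+1)$: the Young step after interpolating $L^2$ between $H^1_0$ and $L^1$ always yields the exponent $2$ on $\|w_i\|_{L^1}$, and it is precisely the square root $\sqrt{B_0}$ appearing in $\limsup_k B_k^{1/q_k}\le C\sqrt{B_0}$ that turns this into the linear bound $C(B+1)$. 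The paper's absorption trick buys a cleaner recursion that stays on integers and requires only one use of \eqref{a.1} per step; your route is closer to the textbook parabolic Moser iteration and is equally valid once the two points above are addressed.
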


\begin{proof}
We apply the Gagliardo--Nirenberg inequality \cite[p.~1034]{Zei90}
with $\theta=d/(d+2)<1$ and the
Poincar\'e inequality to deal with the integral over $w_i^q$ on the right-hand side
of \eqref{a.1}:
\begin{align*}
  \int_\Omega w_i^q dx &= \|w_i^{q/2}\|_{L^2(\Omega)}^2
	\le C_1\|\na w_i^{q/2}\|_{L^2(\Omega)}^{2\theta}
	\|w_i^{q/2}\|_{L^1(\Omega)}^{2(1-\theta)} \\
	&\le \eps\|\na w_i^{q/2}\|_{L^2(\Omega)}^2
	+ C_1^{1+d/2}\eps^{-d/2}\|w_i^{q/2}\|_{L^1(\Omega)}^2
\end{align*}
for any $\eps>0$. Choosing $\eps = K_0/(2q^\alpha K_1)$, which is equivalent to
$K_1 q^\alpha\eps = K_0/2$, \eqref{a.1} becomes
\begin{align*}
  \int_\Omega &e^{t}\sum_{i=1}^n w_i(t)^q dx
	+ \frac{K_0}{2}\int_0^t\int_\Omega e^{s}\sum_{i=1}^n |\na w_i^{q/2}|^2 dxds \\
	&\le C_2 q^{\alpha(1+d/2)}
	\int_0^t e^{s}\sum_{i=1}^n\|w_i\|_{L^{q/2}(\Omega)}^q ds + K_2 q^\beta e^{t},
\end{align*}
where $C_2$ depends on $d$, $K_0$, and $K_1$. We obtain
$$
  \sum_{i=1}^n \|w_i(t)\|_{L^q(\Omega)}^q \le C_2q^{\alpha(1+d/2)}\int_0^t e^{-(t-s)}
	\sum_{i=1}^n \|w_i(s)\|_{L^{q/2}(\Omega)}^q ds + K_2 q^\beta
$$
and, taking the supremum over time,
\begin{align*}
  \sup_{0<s<t}\sum_{i=1}^n \|w_i(s)\|_{L^q(\Omega)}^q
	&\le C_2q^{\alpha(1+d/2)}(1-e^{-t})\sup_{0<s<t}
	\sum_{i=1}^n\|w_i(s)\|_{L^{q/2}(\Omega)}^q + K_2 q^\beta \\
	&\le C_2q^{\alpha(1+d/2)}\sup_{0<s<t}\sum_{i=1}^n \|w_i(s)\|_{L^{q/2}(\Omega)}^q
	+ K_2 q^\beta.
\end{align*}

We choose $q=2^k$ for $k\ge 0$ and set
$b_k = \sum_{i=1}^n \|w_i\|_{L^\infty(0,T;L^{2^k}(\Omega))}^{2^k}+1$. Then
\begin{align*}
  b_k &\le C_2 2^{\alpha(1+d/2)k}\sum_{i=1}^n\|w_i\|_{L^\infty(0,T;
	L^{2^{k-1}}(\Omega))}^{2^{k}} + (K_2+1) 2^{\beta k} \\
	&\le \max\big\{C_2 2^{\alpha(1+d/2)}, (K_2+1)^{1/k} 2^\beta\big\}^k
	\bigg(\sum_{i=1}^n\|w_i\|_{L^\infty(0,T;L^{2^{k-1}}(\Omega))}^{2^{k}}+1\bigg) \\
	&\le \max\big\{C_2 2^{\alpha(1+d/2)}, (K_2+1)2^\beta\big\}^k
	\bigg(\sum_{i=1}^n\|w_i\|_{L^\infty(0,T;L^{2^{k-1}}(\Omega))}^{2^{k-1}}+1\bigg)^2 \\
	&= \gamma^k b_{k-1}^2,
\end{align*}
where
$$
	\gamma = \max\big\{C_2 2^{\alpha(1+d/2)}, (K_2+1)2^\beta\big\}.
$$
To solve this recursion, we set $c_k = \gamma^{k+2}b_k$. Then
$$
  c_k \le \gamma^{2(k+1)}b_{k-1}^2 = (\gamma^{k+1}b_{k-1})^2 = c_{k-1}^2,
$$
which gives $c_k\le c_0^{2^k}\le \gamma^{2^{k+1}}b_0^{2^k}$. Consequently,
$b_k = \gamma^{-k-2}c_k\le \gamma^{2^{k+1}-k-2}b_0^{2^k}$ and,
after taking the $2^k$th root,
$$
  \|w_i\|_{L^\infty(0,T;L^{2^k}(\Omega))}
	\le b_k^{2^{-k}}
	\le \gamma^{2-2^{-k}(k+2)}\bigg(\sum_{i=1}^n
	\|w_i\|_{L^\infty(0,T;L^{1}(\Omega))} + 1\bigg).
$$
The limit $k\to\infty$ concludes the proof.
\end{proof}

\end{appendix}


\end{document}